 \def\newblock{\ }%
\newcommand{\N}{\mathbb{N}}
\newcommand{\Z}{\mathbb{Z}}
\newtheorem{lemma}{Lemma}
\newtheorem{theorem}{Theorem}
\newtheorem{proposition}{Proposition}
\newtheorem{remark}{Remark}
\numberwithin{equation}{section}
\def\IND{\mathbbm{1}}
\newcommand{\EXP}{\mathbb{E}}
\newcommand{\PROB}{\mathbb{P}}
\newcommand{\var}{\mathrm{Var}}
\newcommand{\G}{{\mathcal G}}
\newcommand{\defeq}{\stackrel{\mathrm{def.}}{=}}
\begin{document}

\title{{Subtractive random forests
    \thanks{
Luc Devroye is supported by the Natural Sciences and Engineering Research Council of Canada
(NSERC)
  G. Lugosi acknowledges the support of Ayudas Fundación BBVA a
Proyectos de Investigación Científica 2021 and
the
Spanish Ministry of Economy and Competitiveness grant PID2022-138268NB-I00, financed by MCIN/AEI/10.13039/501100011033,
FSE+MTM2015-67304-P, and FEDER, EU.
}}
\author{
  Nicolas Broutin
\thanks{LPSM, Sorbonne Universit\'e, 4 Place Jussieu, 75005 Paris}
\thanks{Institut Universitaire de France (IUF)}
\and
  Luc Devroye
\thanks{School of Computer Science, McGill University, Montreal, Canada}
\and  
G\'abor Lugosi
\thanks{Department of Economics and Business, Pompeu
  Fabra University, Barcelona, Spain}
\thanks{ICREA, Pg. Lluís Companys 23, 08010 Barcelona, Spain}
\thanks{Barcelona Graduate School of Economics}
\and
Roberto Imbuzeiro Oliveira
\thanks{IMPA, Rio de Janeiro, RJ, Brazil}
}
}

\maketitle

\begin{abstract}
Motivated by online recommendation systems, we study a family of
random forests.  The vertices of the forest are labeled by integers.
Each non-positive integer $i\le 0$ is the root of a tree. Vertices
labeled by positive integers $n \ge 1$ are attached sequentially such
that the parent of vertex $n$ is $n-Z_n$, where the $Z_n$ are i.i.d.\
random variables taking values in $\N$.
We study several characteristics of the resulting random forest. In
particular, we establish
bounds for the expected tree sizes, the number of trees in the
forest, the number of leaves, the maximum degree, and the height of
the forest. 
We show that for all distributions of the $Z_n$, the forest
contains at most one infinite tree, almost surely.
If $\EXP Z_n < \infty$, then there
is a unique infinite tree and the total size of the remaining trees is
finite, with finite expected value if  $\EXP Z_n^2 <
\infty$.
If $\EXP Z_n = \infty$ then almost surely all trees are finite. 
\end{abstract}


\section{Introduction}

In some online recommendation systems a user receives recommendations
of certain topics that are selected sequentially, based on the past
interest of the user. At each time instance, the system chooses a
 topic by selecting a random time length, subtracts this length from
 the current date and recommends the same
 topic that was recommended in the past at that time.  Initially there is an
 infinite pool of topics. The random time lengths are assumed to be
 independent and identically distributed.

 The goal of this paper is to study the long-term behavior of such
 recommendation systems. We suggest a model for such a system
 that allows us to understand many of the most important properties.
 For example, we show that if the expected subtracted time length has
 finite expectation, then, after a random time, the system will
 recommend the same topic forever. When the expectation is infinite,
 all topics are recommended only a finite number of times. 

The system is best understood by studying properties of random
forests that we coin \emph{subtractive random forest} (SuRF).
Every tree in the forest corresponds to a topic and vertices are
attached sequentially, following a subtractive attachment rule.

To define the mathematical model, 
we consider sequential
random coloring of the positive integers as follows.
Let $Z_1,Z_2,\ldots$ be independent, identically distributed random variables, taking values
in the set of positive integers $\N$. Define $C_i=i$ for all nonpositive integers
$i\in \{0,-1,-2,\ldots\}$.  
We assign colors to 
the positive integers $n\in \N= \{1,2,\ldots\}$ by the recursion
\[
     C_n = C_{n-Z_n}~.
\]
This process naturally defines a random forest whose vertex set is $\Z$.
Each $i\in \{0,-1,-2,\ldots\}$ is the root of a tree in the forest.
The tree rooted at $i$ consists of the vertices corresponding to all  $n\in \N$ such that $C_n=i$.
Moreover, there is an edge between vertices $n'<n$ if and only if $n'=n-Z_n$.
Figure \ref{fig:srf}.

In other words, trees of the forest are obtained by sequentially
attaching vertices corresponding to the positive
integers. Denote the tree rooted at $i\in \{0,-1,-2,\ldots\}$ at time
$n$ by $T^{(i)}_n$ (i.e., the tree rooted at $i$ containing vertices
with index at most $n$).
Initially, all trees of the forest contain a single vertex: $T^{(i)}_0=\{i\}$.
At time $n$, vertex $n$ is added to the tree rooted at $i=C_n$ such that
$n$ attaches by an edge to vertex $n-Z_n$. All other trees remain unchanged,
that is, $T^{(i)}_n=T^{(i)}_{n-1}$ for all $i \neq C_n$.

Define $T^{(i)}= \cup_{n\in \N} T^{(i)}_n$ as the random (possibly infinite) tree
rooted at $i$ obtained at the ``end'' of the random attachment process. 

We study the behavior of the resulting forest. The following random variables are of
particular interest :
\begin{eqnarray*}
  S_n^{(i)} &=& \sum_{t=1}^n \IND_{C_t=i}   \quad \text{(the size of tree $T^{(i)}_n$ excluding the root $i\le 0$)~;} \\
  S^{(i)} &=& \sum_{t \in \N} \IND_{C_t=i}  \quad \text{(the size of tree $T^{(i)}$ excluding the root $i\le 0$)~;} \\
  M_n &=& \sum_{i \in \{0,-1,-2,\ldots\}} \IND_{S^{(i)}_n >1}~;  \\
            & &\quad \text{(the number of trees with at least one vertex attached to the root at time $n$ )} \\
  D_n^{(i)} & = & \sum_{t=1}^n \IND_{Z_t = t-i} \quad \text{(the
                  degree of vertex $i$ at time $n$)~.} \\
\end{eqnarray*}
Introduce the notation
\[
   q_n = \PROB\{Z=n\} \quad \text{and} \quad p_n = \PROB\{Z\ge n\} =
   \sum_{t \ge n} q_t
\]
for $n\in \N$, where $Z$  is a random variable distributed as the
$Z_n$.

Another key characteristic of the distribution of $Z$ is 
\[
m_n \defeq \sum_{t=1}^n p_t = \EXP \left[ \min(Z,n) \right]~.
\]
Note that $m_n$ is nondecreasing in $n$ and is bounded if and
only if $\EXP Z < \infty$.

Finally, let $r_n= \PROB\{C_n=0\}$ denote the probability that vertex $n$ belongs to the tree rooted at $0$.
Then $r_n$ satisfies the recursion
\begin{equation}
\label{eq:basicrecursion}
    r_n = \sum_{t=1}^n q_t r_{n-t}~,
\end{equation}
for $n\in \N$, with $r_0=1$.

\begin{figure}[t]
\begin{center}
\leavevmode
\includegraphics[width=\textwidth]{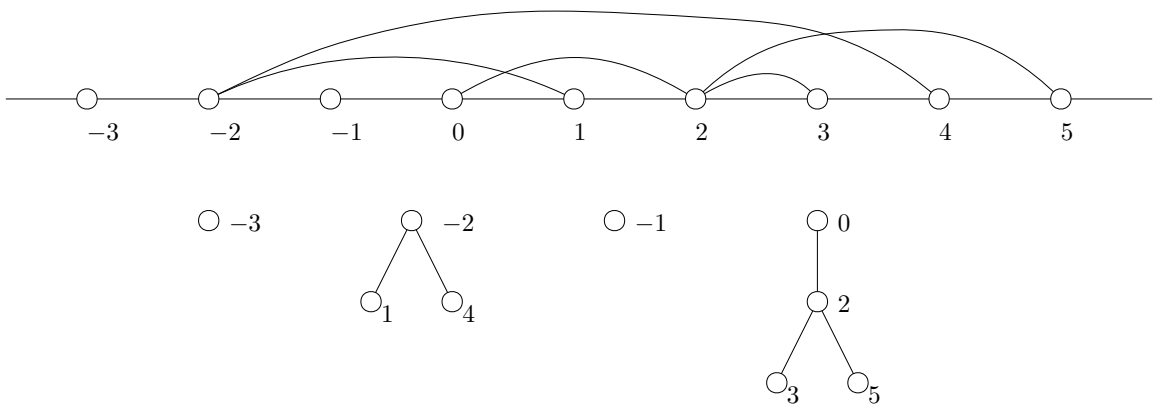}
\end{center}
\caption{An example of the subtractive attachment process (up to time
  $5$) and the
  resulting forest. Here $Z_1=3$, $Z_2=2$, $Z_3=1$, $Z_4=6$, and $Z_5=3$.}
\label{fig:srf}
\end{figure}

The paper is organized as follows. In Section \ref{sec:survival} we
study whether the trees $T^{(i)}$ of the forest are finite or infinite. We show
that it is the finiteness of the expectation of $Z$ that characterizes the behavior of
the forest in this respect. 
In particular, if $\EXP Z = \infty$, then,
almost surely,
all trees of the forest are finite. On the other hand, if
$\EXP Z < \infty$, then the forest has a unique infinite tree and the
total number of non-root vertices in finite trees is finite almost surely.

In Section \ref{sec:meantreesize} the expected size of the trees $T_n^{(i)}$
is studied at time $n$. It is shown that when $Z$ has full
support, the expected size of
each tree at time $n$ of the forest tends to infinity, regardless
of the distribution. The expected tree sizes are sublinear in $n$ if and only if $\EXP Z = \infty$.

We also study various parameters of the random trees of the forest. In
Sections \ref{sec:leaves}, \ref{sec:degrees}, and \ref{sec:height}
we derive results on the number of leaves, vertex degrees, and the height
of the forest, respectively.

\subsection{Related work}

The random subtractive process studied here was examined
independently, in quite different contexts, 
by
Hammond and Sheffield \cite{HaSh13}
and
Baccelli and Sodre \cite{BaSo19},
Baccelli, Haji-Mirsadeghi, and Khezeli \cite{BaHaKh18},
and
Baccelli, Haji-Mirsadeghi, and Khaniha \cite{BaHaKh22}.
These papers consider an extension of the 
 ancestral lineage process to the set $\Z$ of integers defined as follows: let
 $\{Z_n\}_{n\in \Z}$ be i.i.d.\ random variables taking positive
 integer values. This naturally defines a random graph $\G$ with vertex set $\Z$
 such that vertices $m,n\in \Z$ with $m<n$ are connected by an edge
 if and only if $n-Z_n=m$.
If we define the graph $\G^{(0)}$ as the
subgraph of $\G$ obtained by removing all edges $(m,n)$ for which
$\max(m,n)\le 0$, then $\G^{(0)}$ is exactly the subtractive
random forest studied in this paper. 

It is shown in \cite{HaSh13} --- and also in \cite{BaHaKh22} ---
that if $\sum_{n=1}^\infty r_n^2 < \infty$, then almost surely $\G$ has a unique
connected component, whereas  if $\sum_{n=1}^\infty r_n^2 = \infty$, then
almost surely $\G$ has infinitely many
connected components. Hammond and Sheffield  are only interested in the latter (extremely heavy-tailed) case.
They use the resulting coloring of the integers to define a random walk that converges to fractional Brownian motion.
See also Igelbrink and Wakolbinger \cite{IgWa22} for further results
on the urn model of Hammond and Sheffield.

The paper of Baccelli and Sodre \cite{BaSo19} considers the case when
$\EXP Z < \infty$. They show that in this case the graph $\G$ has a
unique doubly infinite path. This implies that the
subtractive random forest $\G^{(0)}$ contains a unique infinite tree. This fact is
also implied by Theorem \ref{thm:finitemean} below. The fact that
all trees of the forest $\G^{(0)}$ become extinct when $\EXP Z = \infty$ (part (ii)
of our Theorem \ref{thm:extinction}) is implicit in Proposition 4.8 of
\cite{BaHaKh22}. In that paper, the graph $\G$ is referred to as
a \emph{Renewal Eternal Family Forest} (or \emph{Renewal Eternal
  Family Tree} when it has a single connected component).

The long-range seed bank model of Blath, Jochen, Gonz{\'a}lez, Kurt, and Spano \cite{BlGoKuSp13} is also 
based on a similar subtractive process.

Another closely related model is studied by Chierichetti, Kumar, and Tomkins \cite{ChKuTo20}. In their model, the
nonnegative integers are colored by a finite number of colors and the subtractive process is not stationary. Given a sequence of 
positive ``weights'' $\{w_i\}_{i=1}^\infty$, the colors are assigned to the positive integers sequentially such that the color
of $n$ is the same as the color of $n-Z_n$ where the distribution of $Z_n$ is given by $\PROB\{Z_n=i\}= w_i/\sum_{j=1}^i w_j$.
(The process is initialized by assigning fixed colors to the first few positive integers.) 
Chierichetti, Kumar, and Tomkins are mostly interested in the existence of the limiting empirical distribution of the colors. 


\section{Survival and extinction}
\label{sec:survival}

This section is dedicated to the question whether the trees $T^{(i)}$ of the
subtractive random forest are finite or infinite. The main results
show a sharp contrast in the behavior of the limiting random forest
depending on the tail behavior of the random variable $Z$. 
When $Z$ has a light tail such that $\EXP Z < \infty$, then a single
tree survives and the total number of non-root vertices of all the remaining trees is finite, almost
surely.
This is in sharp contrast to what happens when $Z$ is heavy-tailed:
When $\EXP Z = \infty$, then all trees become extinct, that is, every
tree in the forest is finite. 




\subsection{$\EXP Z<\infty$: a single infinite tree}
\label{sec:finiteexp}

First we consider the case when $\EXP Z < \infty$. 
We show that, almost surely, the forest contains a
single infinite tree.
Moreover, the
total number of non-root vertices in all other trees is an almost surely finite random variable.
In other words, the sequence of ``colors'' $C_n$ becomes constant
after a random index. 

\begin{theorem}
\label{thm:finitemean}
Let $\EXP Z < \infty$ and assume that $q_1>0$.
Then there exists a positive random variable $N$
with $\PROB\{N<\infty\}=1$ and a (random) index $I\in
\{0,-1,\ldots\}$ such that $C_n=I$ for all $n\ge N$,  
with probability one.
\end{theorem}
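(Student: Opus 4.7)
The plan is to reduce the stabilization of the color sequence $(C_n)$ to two almost-sure facts: only finitely many $n \in \N$ attach directly to a root of the forest, and among the finitely many resulting subtrees only one is infinite.

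\textbf{Step 1 (parents go to infinity).} Write $\phi(m) := m - Z_m$ for the parent of $m \in \N$. I would first prove that $\phi(m) \to \infty$ almost surely. For each fixed $K \geq 1$,
\[
\sum_{m \geq 1} \PROB\bigl(\phi(m) < K\bigr)
\;=\; \sum_{m \geq 1} \PROB\bigl(Z > m - K\bigr)
\;\leq\; (K-1) + \sum_{j \geq 1} p_j
\;=\; (K-1) + \EXP Z
\;<\; \infty,
\]
so the first Borel--Cantelli lemma gives that a.s.\ only finitely many $m$ satisfy $\phi(m) < K$. Taking a countable intersection over $K \in \N$ yields $\phi(m) \to \infty$ almost surely, and in particular $B := \{m \in \N : \phi(m) \leq 0\}$ is a.s.\ finite.

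\textbf{Step 2 (the forest splits into finitely many subtrees).} Because $\phi^k(m)$ is strictly decreasing in $k$, the ancestor chain of any $m \in \N$ enters $\{0,-1,-2,\ldots\}$ in finitely many steps, and its last positive entry is the unique element of $B$ lying on the chain. This gives a disjoint partition $\N = \bigsqcup_{m \in B} D_m$, where $D_m := \{n \in \N : m \text{ is on the ancestor chain of } n\}$, and $C_n = \phi(m)$ for all $n \in D_m$. Since $|B|$ is a.s.\ finite but $\sum_{m \in B} |D_m| = \infty$, at least one subtree $D_{m^*}$ must be infinite.

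\textbf{Step 3 (uniqueness of the infinite subtree and conclusion).} The main obstacle is to show that almost surely \emph{exactly one} of the subtrees $D_m$ ($m \in B$) is infinite. Each vertex of the two-sided graph $\G$ has expected in-degree $\sum_{k \geq 1} q_k = 1$, so by Borel--Cantelli each fixed vertex has finitely many children a.s., and a countable union over vertices yields almost sure local finiteness of $\G$. K\"onig's lemma then guarantees that any infinite $D_m$ contains an infinite ascending chain $m = b_0 < b_1 < b_2 < \ldots$ with $\phi(b_{j+1}) = b_j$. If two distinct $m_1, m_2 \in B$ had infinite subtrees, the corresponding disjoint ascending chains, prolonged backward by iterated $\phi$ in $\G$, would produce two distinct doubly infinite paths, contradicting the uniqueness result of Baccelli and Sodre~\cite{BaSo19} (which applies because $\EXP Z < \infty$ and $q_1 > 0$ ensures aperiodicity). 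Letting $m^* \in B$ be the unique index with $|D_{m^*}| = \infty$ and $I := \phi(m^*) \in \{0,-1,-2,\ldots\}$, the complement $\N \setminus D_{m^*} = \bigcup_{m \in B \setminus \{m^*\}} D_m$ is then a finite union of finite sets, so setting $N := 1 + \max(\N \setminus D_{m^*})$ (with $N := 1$ if the complement is empty) gives $N < \infty$ almost surely and $C_n = I$ for all $n \geq N$, as required.
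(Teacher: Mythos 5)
Your proof is correct, but it takes a genuinely different route from the paper's. Steps~1 and 2 are clean and self-contained: $\EXP Z < \infty$ implies via Borel--Cantelli that the set $B = \{m\in\N : Z_m \geq m\}$ of vertices attaching directly to a root is a.s.\ finite, which yields a partition of $\N$ into finitely many subtrees. The crucial point is Step~3, where you establish that exactly one of these subtrees is infinite by appealing to Baccelli and Sodre's theorem~\cite{BaSo19} that $\G$ has a unique doubly infinite path (after routing through local finiteness and K\"onig's lemma to extract an infinite ascending chain from any infinite subtree). This is logically sound, and indeed the paper acknowledges in its related-work discussion that Baccelli--Sodre's result implies the conclusion of Theorem~\ref{thm:finitemean}; you are essentially formalizing that remark.

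The paper's own proof is quite different and entirely self-contained: it introduces a Markov chain $\{B_n\}$ recording the length of the current maximal run of consecutive vertices each linked within the run, shows directly that $\PROB\{B_n = n \text{ for all } n\} \geq \exp(-\EXP Z/q_1) > 0$, concludes from the Markov property that the reset state $1$ is transient, and takes $N$ to be the last reset time. This argument requires no external input, is strikingly elementary, and in addition the same chain is the engine of the proof of Theorem~\ref{thm:secondmoment} (the moment condition for $\EXP N < \infty$). Your route buys conceptual transparency about the tree decomposition, but at the cost of importing a nontrivial external result whose precise hypotheses (aperiodicity in particular) you invoke without verifying. Note also that the paper derives the uniqueness of the infinite tree (Theorem~\ref{thm:extinction}(i)) \emph{from} Theorem~\ref{thm:finitemean}, so your proof reverses the paper's logical dependency; this is legitimate since Baccelli--Sodre is independent of this paper, but a reader should be aware of it.
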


\begin{proof}
Define a
Markov chain $\{B_n\}_{n\in \N}$ with state space $\N$ by
the recursion $B_1=1$ and, for $n\ge 1$,
\[
  B_{n+1} = \left\{ \begin{array}{ll}
                      B_n +1 & \text{if $Z_{n+1} \le B_n$}  \\
                      1 & \text{otherwise.}
                          \end{array}  \right.
\]
Thus, $B_n$ defines the length of a block of consecutive vertices such that each vertex in the
block (apart from the first one) is linked to a previous vertex in the
same block. In particular, all vertices in the block belong to the
same tree.

Note first that for any $n >1$,
\begin{eqnarray*}
   \PROB\{ B_n=n \}& = & q_1(q_1+q_2)\cdots (q_1+\cdots + q_{n-1}) \\
                    & = & \prod_{i=1}^{n-1} (1-p_{i+1}) \\
                    & \ge & \exp\left( - \sum_{i=1}^{n-1} \frac{p_{i+1}}{1-p_{i+1}}\right) \\
                    & \ge & \exp\left( - \frac{1}{1-p_2}\sum_{i=1}^{n-1} p_{i+1}\right) \\
   & \ge & \exp\left( - \frac{\EXP Z}{q_1}\right)~.
\end{eqnarray*}
Since the events $\{ B_n=n \}$ are nested, by continuity of measure we
have that
\begin{equation}
\label{eq:escape}
   \PROB\{ B_n=n \ \text{for all $n\in \N$} \} \ge \exp\left( -
     \frac{\EXP Z}{q_1}\right) >0~.
\end{equation}
Hence, with positive probability, $C_n=0$ for all $n\ge 1$.
(Note that $\PROB\{C_1=0\}=q_1$ is positive by assumption.)
Since
 $\{B_n\}_{n\in \N}$ is a Markov chain, this implies that,
 with probability one, the set $\{n: B_n=1\}$ is finite, which implies
 the theorem.
 We may take $N= \max\{n\in \N: B_n=1\}$ as the (random) index after which
the sequence $C_n$ is a constant.
\end{proof}

Note that the assumption $q_1>0$ may be somewhat weakened. However,
some condition is necessary to avoid periodicity. For example, if the distribution
of $Z$ is concentrated on the set of even integers, then the assertion of Theorem \ref{thm:finitemean}
cannot hold.

The next result shows that the random index $N$ has a finite
expectation if and only if $Z$ has a finite second moment.

\begin{theorem}
\label{thm:secondmoment}
Let $\EXP Z < \infty$ and assume that $q_1>0$. Consider random index
$N$ defined in the proof of Theorem \ref{thm:finitemean}.
Then $\EXP N < \infty$ if and only if $\EXP Z^2 < \infty$.
In particular, if $\EXP Z^2 < \infty$, then the total number of
vertices $n\in \N$ outside of the unique infinite tree has finite expectation.
\end{theorem}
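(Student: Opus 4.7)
The plan is to leverage the renewal structure of the Markov chain $\{B_n\}$ from the proof of Theorem \ref{thm:finitemean}. Recall $N = \max\{n : B_n = 1\}$. Let $1 = V_1 < V_2 < \cdots < V_R$ denote the successive visits of $B$ to state $1$, so that $N = V_R$. By the strong Markov property, at each such visit the chain either never returns to $1$ (probability $p := \prod_{j \ge 2}(1-p_j) > 0$, the escape probability from \eqref{eq:escape}) or returns after a finite first-return time $T$. Hence $R$ is Geometric with parameter $p$, and the interarrival times $T_i := V_{i+1}-V_i$ for $i = 1,\ldots,R-1$ are i.i.d.\ copies of $T$ conditioned on $\{T < \infty\}$, independent of $R$. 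Conditioning on $R$ gives
\[
\EXP N \;=\; 1 + \EXP[R-1]\cdot \EXP[T \mid T < \infty] \;=\; 1 + \frac{\EXP[T\,\IND_{T<\infty}]}{p},
\]
so the task reduces to proving that $\EXP[T\,\IND_{T<\infty}] < \infty$ if and only if $\EXP Z^2 < \infty$.

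To analyze this, I use the explicit tail of $T$ exactly as in the escape-probability computation: starting from $B_1 = 1$, the event $\{T \ge k\}$ coincides with $\{B_j = j$ for $1 \le j \le k\}$, which requires $Z_{j+1} \le j$ for $j = 1, \ldots, k-1$, so $\PROB\{T \ge k\} = \prod_{j=1}^{k-1}(1 - p_{j+1})$ and $\PROB\{T = \infty\} = p$. Writing $a_k := \PROB\{T \ge k\} - p$, the tail formula for expectations gives $\EXP[T\,\IND_{T<\infty}] = \sum_{k\ge 1} a_k$, where
\[
a_k \;=\; \prod_{j=1}^{k-1}(1 - p_{j+1}) \cdot \Bigl[\,1 - \prod_{j=k}^\infty (1-p_{j+1})\,\Bigr].
\]
The main technical step, and where I expect the only real work, is a two-sided comparison $a_k \asymp s_k$ with $s_k := \sum_{j \ge k+1} p_j$. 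The upper bound $a_k \le s_k$ follows from the union bound $1 - \prod(1 - p_{j+1}) \le \sum p_{j+1}$ together with $\prod_{j=1}^{k-1}(1-p_{j+1}) \le 1$. For the lower bound, note $\prod_{j=1}^{k-1}(1-p_{j+1}) \ge p$ always, while $\EXP Z = \sum_j p_j < \infty$ forces $s_k \to 0$, so that for $k$ large enough $s_k \le 1$ and
$1 - \prod_{j \ge k}(1-p_{j+1}) \ge 1 - e^{-s_k} \ge s_k/2$, giving $a_k \ge (p/2)\,s_k$.

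The conclusion then follows by swapping sums and invoking the standard identity $\sum_{j \ge 1} j\,\PROB\{Z \ge j\} = (\EXP Z^2 + \EXP Z)/2$ for $\N$-valued $Z$:
\[
\sum_{k=1}^\infty s_k \;=\; \sum_{j=1}^\infty (j-1)\,p_j \;=\; \sum_{j=1}^\infty j\,\PROB\{Z \ge j\} - \EXP Z \;=\; \tfrac{1}{2}\bigl(\EXP Z^2 - \EXP Z\bigr),
\]
which is finite exactly when $\EXP Z^2 < \infty$ (recall $\EXP Z < \infty$ is assumed). This proves the equivalence. For the ``in particular'' clause, observe that by the definition of $N$ the chain $B_n$ strictly increases for $n > N$, so $Z_{n+1} \le B_n$ forces every vertex $n > N$ to attach to a predecessor lying in $\{N,\ldots,n-1\}$; hence the unique infinite tree contains every vertex $n \ge N$, and the set of $n \in \N$ lying outside this tree is contained in $\{1,\ldots,N-1\}$. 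When $\EXP Z^2 < \infty$ the previous equivalence gives $\EXP N < \infty$, which then yields the claimed finite expectation.
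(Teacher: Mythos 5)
Your proof is correct and yields the right equivalence, but it takes a different route from the paper's. Both arguments start from the chain $\{B_n\}$ and $N=\max\{n:B_n=1\}$, and both reduce to the same series. The paper, however, works with occupation times: writing $N=\sum_{n}\sum_{i}i\,\IND_{B_n=i,B_{n+1}=1}$ gives $\EXP N=\sum_{i\ge 1}ip_{i+1}\EXP N_i$, and since $\EXP N_i=\prod_{j=2}^i(1-p_j)\EXP N_1$ is squeezed uniformly between $c_0\EXP N_1$ and $\EXP N_1$, one gets $\EXP N\asymp\EXP N_1\sum_i ip_{i+1}=\EXP N_1\cdot\tfrac12\EXP Z(Z-1)$ with no asymptotics needed. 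You instead exploit the renewal structure: $R$ geometric with parameter $p$, Wald-type conditioning to get $\EXP N=1+\EXP[T\,\IND_{T<\infty}]/p$, then a tail computation $\EXP[T\,\IND_{T<\infty}]=\sum_k a_k$ with a two-sided comparison $a_k\asymp s_k=\sum_{j>k}p_j$, landing on the same quantity $\sum_j(j-1)p_j=\tfrac12\EXP Z(Z-1)$. Your lower bound $a_k\ge(p/2)s_k$ requires $s_k\le 1$, and you correctly note that this only holds for $k$ large, which is all that is needed for the convergence/divergence dichotomy; the paper's uniform product bounds avoid this small technicality. In exchange, your renewal reading makes the mechanism arguably more transparent, and you also supply an explicit argument for the ``in particular'' clause (vertices outside the infinite tree lie in $\{1,\ldots,N-1\}$, since for $n>N$ the chain forces each vertex to attach inside the current block), a point the paper leaves implicit.
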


\begin{proof}
Consider the Markov chain $\{B_n\}_{n\in \N}$ defined in the proof of Theorem
\ref{thm:finitemean}. For $i\in \N$, let $N_i=\sum_{n=1}^\infty
\IND_{B_n=i}$ denote the number of times the Markov chain visits state
$i$. The key observation is that we may write
\[
    N= \sum_{n=1}^\infty \sum_{i=1}^\infty i \IND_{B_n=i, B_{n+1}=1}~.
\]
Since $\PROB\{B_{n+1}=1 |B_n=i\} =p_{i+1}$, this implies 
\[
    \EXP N= \sum_{i=1}^\infty i p_{i+1}\EXP N_i~.
\]
Next, notice that $\EXP N_2 =q_1\EXP N_1$, $\EXP N_3
=(q_1+q_2)\EXP N_2$, and similarly,
$\EXP N_i =(1-p_i)\EXP N_{i-1}=\prod_{j=2}^i(1-p_j) \EXP N_1$.
{\color{blue}
 By convention, we write $\prod_{j=2}^1(1-p_j)=1$.
}
It follows from \eqref{eq:escape} that $N_1$ is stochastically dominated by a geometric 
random variable and therefore $\EXP N_1 < \infty$.
Thus, 
\[
    \EXP N= \EXP N_1 \sum_{i=1}^\infty i p_{i+1} \prod_{j=2}^i(1-p_j)~.
  \]
 As noted in the proof of Theorem \ref{thm:finitemean}, for all
  $i\ge 1$,
\[
  c_0  \le \prod_{j=2}^i(1-p_j)
    \le 1~,
  \]
where $c_0 \defeq \exp\left( - \frac{\EXP Z}{q_1}\right)$ is a
positive constant,
and therefore
\[
c_0 \EXP N_1 \sum_{i=1}^\infty i p_{i+1} \le  \EXP N \le  \EXP N_1 \sum_{i=1}^\infty i p_{i+1}~.
\]
Since $\sum_{i=1}^\infty i p_{i+1}= (1/2) \EXP Z(Z-1)$, the theorem follows.
\end{proof}

 \subsection{$\EXP Z=\infty$: extinction of all trees}
 \label{sec:infiniteexp}

 In this section we show that when $Z$ has infinite expectation, then every tree of the forest becomes
extinct, almost surely.  In other words, with probability one, there is
no infinite tree in the random forest. This is in sharp contrast with
the case when $\EXP Z<\infty$, studied in Section \ref{sec:finiteexp}.

Recall that for $i\le 0$, $S^{(i)}$ denotes the size of tree $T^{(i)}$
rooted at vertex $i$.

A set of vertices $\{n_1,n_2,\ldots  \} \subset \{0,1,2,\ldots\}$
forms a \emph{maximal infinite path}
if $n_1<n_2< \cdots$, for all $k >1$, $n_k-Z_{n_k}= n_{k-1}$,
and $n_1-Z_{n_1} \le 0$.

\begin{theorem}
\label{thm:extinction}
\begin{itemize}
\item[(i)]
If $\EXP Z < \infty$, then, with probability one, there exists a
unique integer $i \le 0$ such that $S^{(i)}=\infty$ and
the forest contains a unique maximal infinite path. Moreover,
\[
  \PROB\left\{  S^{(0)}=\infty \right\} = \frac{1}{\EXP Z}~.
\]
\item[(ii)]
If $q_i>0$ for all $i\in \N$ and $\EXP Z = \infty$, then
\[
  \PROB\left\{  \exists i\le 0: S^{(i)}=\infty \right\} = 0~.
\]
\end{itemize}
\end{theorem}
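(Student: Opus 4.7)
My plan for Theorem \ref{thm:extinction} splits naturally into the two parts.

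\textbf{Part (i).} Assume first $q_1>0$, so that Theorem \ref{thm:finitemean} applies: there is a.s.\ a (random) index $I\in\{0,-1,-2,\ldots\}$ with $C_n=I$ for all sufficiently large $n$, so the forest has exactly one infinite tree, $T^{(I)}$. Hence
\[
\PROB\{S^{(0)}=\infty\} \;=\; \PROB\{I=0\} \;=\; \lim_{n\to\infty}\PROB\{C_n=0\} \;=\; \lim_{n\to\infty} r_n,
\]
and the recursion \eqref{eq:basicrecursion} is the classical renewal equation, so Blackwell's renewal theorem yields $r_n\to 1/\EXP Z$ and hence the stated formula. The periodic case ($q_1=0$) reduces to the aperiodic one by passing to the support sublattice. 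For the unique maximal infinite path: $I$ has a.s.\ only finitely many children in the forest (first Borel-Cantelli on the independent events $\{Z_j=j-I\}$ whose probabilities sum to $\sum_k q_k=1$), and by the argument in Theorem \ref{thm:finitemean} the chain $B_n$ eventually never returns to $1$, so exactly one child of $I$ initiates an infinite descent chain. Iterating gives a unique infinite descending path.

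\textbf{Part (ii).} Work in the extended graph $\G$ on $\Z$ from Section 1.1. Since the tree of $i\le 0$ in $\G^{(0)}$ equals the set of descendants of $i$ in $\G$ (intersected with $\N$), by countable sub-additivity and translation invariance it suffices to prove that $\alpha \defeq \PROB\{|T(0)|_\G=\infty\}=0$.

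Call $v$ \emph{fertile} when $|T(v)|_\G=\infty$. The first Borel-Cantelli lemma ($\sum q_k=1<\infty$) yields that $v$ has a.s.\ only finitely many children in $\G$, so each fertile $v$ must have at least one fertile child. The indicator $\{Z_{v+k}=k\}$ depends only on $Z_{v+k}$, and $\{v+k\text{ fertile}\}$ depends only on $Z_{v+k+1},Z_{v+k+2},\ldots$, so these events are independent, giving
\[
\EXP[\#\{\text{fertile children of }v\}] \;=\; \sum_{k\ge 1} q_k\alpha \;=\; \alpha.
\]
A non-fertile vertex has no fertile children (else its forward subtree would be infinite), so the conditional expectation given that $v$ is fertile equals $1$; being a positive-integer-valued random variable with conditional mean $1$, it equals $1$ a.s. Consequently, on $\{\alpha>0\}$ the fertile set $F$ is a disjoint union of bi-infinite paths in $\G$. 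Along any such path $\cdots\to v_{-1}\to v_0\to v_1\to\cdots$, the gaps $J_k=v_k-v_{k-1}=Z_{v_k}$ are i.i.d.\ with the distribution of $Z$: conditioning on fertility of $v_k$ (which depends on $Z_{v_k+1},Z_{v_k+2},\ldots$) does not alter the law of $Z_{v_k}$. By the strong law, $v_n/n\to\EXP Z$, so each fertile path has asymptotic density $1/\EXP Z$, which is zero when $\EXP Z=\infty$.

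The main obstacle is the last step: concluding $\alpha=0$ from this. A disjoint countable union of zero-density subsets of $\Z$ can have positive density, so the density-per-path argument is not by itself conclusive; one must rule out the scenario of infinitely many thinly spread fertile paths. I expect to close the argument by a mass transport / Palm computation on the stationary graph $\G$: equating the expected total edge length $\EXP[\sum_{v\in[-N,N]\cap F}Z_v] = (2N{+}1)\alpha\EXP Z$ with a bound in terms of the number of paths intersecting $[-N,N]$ and a controlled boundary term, one obtains an inequality of the form $\alpha\cdot\EXP Z\le 1$, forcing $\alpha=0$ when $\EXP Z=\infty$. Alternatively, Proposition 4.8 of \cite{BaHaKh22} provides this directly.
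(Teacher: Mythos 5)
Your derivation of $\PROB\{S^{(0)}=\infty\}=1/\EXP Z$ via $\PROB\{S^{(0)}=\infty\}=\lim_n r_n$ and Blackwell's renewal theorem applied to \eqref{eq:basicrecursion} is correct and is a genuinely different route from the paper, which instead computes
\[
\EXP\Bigl[\sum_{i\le 0}\IND_{S^{(i)}=\infty}\Bigr]=\PROB\{S^{(0)}=\infty\}\,\EXP Z
\]
and equates the left side with $1$ using Theorem~\ref{thm:finitemean}. The rest of part (i), and all of part (ii), are not complete.

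For uniqueness of the maximal infinite path in (i): Borel--Cantelli and K\"onig's lemma give at least one child of $I$ with infinite subtree, but ``the chain $B_n$ eventually never returns to $1$'' does not yield \emph{exactly} one. That statement only says that the block beginning at the last renewal time $N^{*}$ never breaks, i.e.\ the subtree of $N^{*}$ is $\{N^{*},N^{*}+1,\dots\}$; it says nothing about whether that subtree, or the forest below $N^{*}$, can branch into two disjoint infinite paths, which is precisely the claim at issue. The paper obtains uniqueness from the saturation of the union bound: since $\PROB\{S^{(0)}=\infty\}=\sum_{i}q_i\PROB\{S^{(i)}=\infty\}$, the events $\{Z_i=i,\,S^{(i)}=\infty\}$ must be a.s.\ pairwise disjoint, so no two maximal infinite paths can meet at any vertex. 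Interestingly, your own ``fertile set'' computation in part (ii)---a fertile vertex has at least one fertile child and the conditional expected number of fertile children equals $1$, hence equals $1$ a.s.---is exactly the missing lemma, but you never connect it to part (i).

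Part (ii) is, as you acknowledge, unfinished: you correctly identify the obstacle (a countable disjoint union of density-zero paths can have positive density) and then defer the key step to an unperformed mass-transport computation or to an external reference. The paper's argument is different and elementary. From the displayed identity, if the left side is finite and $\EXP Z=\infty$ then $\PROB\{S^{(0)}=\infty\}=0$; if the left side is infinite, then with positive probability there are two infinite trees, which the paper rules out by showing $\PROB\{S^{(i)}=\infty,S^{(j)}=\infty\}=0$ for $i<j\le 0$ via the change of measure
\[
\PROB\{S^{(i)}=\infty,S^{(j)}=\infty\}\le\sum_{k\ne\ell}\frac{q_{k-i}\,q_{j-\ell}}{q_k q_\ell}\,\PROB\{Z_k=k,S^{(k)}=\infty,Z_\ell=\ell,S^{(\ell)}=\infty\}=0,
\]
the last equality again from the disjointness established in (i). This is where the hypothesis $q_i>0$ for all $i$ is used; your proposal never invokes it, which is itself a symptom of the gap.
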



\begin{proof}
We naturally extend the notation $T^{(i)}$ to positive integers $i\in \N$ 
so that $T^{(i)}$ is the subtree of the random forest rooted at
$i$. Similarly, $S^{(i)}= |T^{(i)}|$ denotes the number of vertices in this
subtree.

In Proposition \ref{prop:finitedegrees} below we show that, regardless
of the distribution of $Z$, there is no vertex of infinite degree,
almost surely.
This implies that the probability that the tree rooted at $0$ is infinite 
equals
\begin{equation}
\label{eq:unionbound}
  \PROB\left\{  S^{(0)}=\infty \right\}
  = \PROB\left\{ \bigcup_{i\in \N} \left\{Z_i=i, S^{(i)}=\infty
    \right\}\right\}
  \le \sum_{i\in \N} q_i \PROB\left\{  S^{(i)}=\infty \right\}~,
\end{equation}
where we used the union bound
and the fact that the events $Z_i=i$ and $S^{(i)}=\infty$ are
independent since the latter only depends on the random variables $Z_{i+1},Z_{i+2},\ldots$.
  Since $\PROB\left\{  S^{(i)}=\infty
\right\} = \PROB\left\{  S^{(0)}=\infty \right\}$ for all $i\ge 1$,
the right-hand side of \eqref{eq:unionbound} equals $ \PROB\left\{
  S^{(0)}=\infty \right\}$, that is, the inequality in
\eqref{eq:unionbound} cannot be strict. This means that the events
$\left\{Z_i=i, S^{(i)}=\infty \right\}$ for $i\in \N$ are disjoint (up
to a zero-measure set).
In particular, almost surely, there are no two maximal infinite paths
meeting at vertex $0$.
By countable additivity, this also implies that
\[
   \PROB\left\{ \text{there exist two infinite paths meeting at any $i\in
       \Z$} \right\} = 0~.
\]
In particular, with probability one, all maximal infinite paths in
the forest are disjoint.

Similarly to \eqref{eq:unionbound}, for all $i\le 0$, 
\[
    \PROB\left\{  S^{(i)}=\infty \right\}
  = 
  \sum_{j: j+i>0} q_j \PROB\left\{  S^{(i+j)}=\infty \right\}
  = \sum_{j: j+i>0} q_j \PROB\left\{  S^{(0)}=\infty \right\}
= p_{1-i} \PROB\left\{  S^{(0)}=\infty \right\}~.
\]
Hence, the expected number of trees in the forest that contain
infinitely many vertices equals
\begin{equation}
\label{eq:expected}
  \EXP \left[ \sum_{i\le 0}\IND_{S^{(i)}=\infty} \right]
= \PROB\left\{  S^{(0)}=\infty \right\} \sum_{i\le 0} p_{1-i} 
    = \PROB\left\{  S^{(0)}=\infty \right\}\EXP Z~.
\end{equation}
If $\EXP Z < \infty$, then by Theorem \ref{thm:finitemean}, the
expectation on the left-hand side equals one.
This implies part (i) of Theorem \ref{thm:extinction}.

It remains to prove part (ii), so assume that $\EXP Z =\infty$.
Suppose first that the left-hand side of \eqref{eq:expected} is finite.
Then we must have $\PROB\left\{  S^{(0)}=\infty \right\}=0$.
But then $\PROB\left\{  S^{(i)}=\infty\right\}=0$ for all $i\ge 0$,
which implies the statement.

Finally, assume that $\EXP \left[ \sum_{i\le 0}\IND_{S^{(i)}=\infty}
\right]= \infty$. This implies that with positive probability, there
are at least two infinite trees in the forest. However, as we show
below, almost surely there is at most one infinite tree in the forest.
Hence, this case is impossible, completing the proof.

It remains to prove that for any $i < j \le 0$,
\[
   \PROB\left\{ S^{(i)}=\infty, S^{(j)}=\infty \right\} = 0~.
\]
For $i < k$, denote by $E_{i,k}=\{k-Z_k=i\}$ the event that $k$ is a vertex in $T^{(i)}$ connected to $i$ 
by an edge (i.e., $k$ is a level $1$ node in the tree $T^{(i)}$). Then by  the union bound,
\begin{eqnarray*}
  \PROB\left\{ S^{(i)}=\infty, S^{(j)}=\infty \right\}
\le 
         \sum_{k,\ell \in \N: k\neq \ell} q_{k-i} q_{j-\ell}
         \PROB\left\{ S^{(k)}=\infty, S^{(\ell)}=\infty | E_{i,k}, E_{j,\ell}  \right\} ~.
\end{eqnarray*}
The key observation is that for all $i<j\le 0$ and $k,\ell \in \N$,
\[
   \PROB\left\{ S^{(k)}=\infty, S^{(\ell)}=\infty | E_{i,k}, E_{j,\ell}  \right\}  = \PROB\left\{ S^{(k)}=\infty, S^{(\ell)}=\infty | E_{0,k}, E_{0,\ell}  \right\}~, 
\]
and therefore
\begin{eqnarray*}
  \PROB\left\{ S^{(i)}=\infty, S^{(j)}=\infty \right\}
& \le &
         \sum_{k,\ell \in \N: k\neq \ell} \frac{q_{k-i} q_{j-\ell}}{q_kq_\ell}  q_kq_\ell
         \PROB\left\{ S^{(k)}=\infty, S^{(\ell)}=\infty | E_{0,k}, E_{0,\ell}  \right\} \\
& = &
         \sum_{k,\ell \in \N: k\neq \ell} \frac{q_{k-i} q_{j-\ell}}{q_kq_\ell} 
         \PROB\left\{ Z_k=k, S^{(k)}=\infty, Z_\ell=\ell, S^{(\ell)}=\infty \right\}~.
\end{eqnarray*}
However, as shown above, each term of the sum on the right-hand side equals zero, which concludes the proof.
\end{proof}

\section{Expected tree sizes}
\label{sec:meantreesize}

In this section we study the expected size of the trees of the random
forest. In particular, we show that in all cases (if $Z$ has full
support), the expected size of
each tree at time $n$ of the forest converges to infinity as $n\to
\infty$. The rate of growth is sublinear if and only if $\EXP Z = \infty$.

Denote the expected
size of the tree rooted at $i$ by $R_n^{(i)} = \EXP S_n^{(i)}$.

\begin{proposition}
\label{prop: treesizes}
{\sc (expected tree sizes.)}
\begin{itemize}
\item[(1)]
  For every $i\in \{0,-1,\ldots \}$, the expected size of the tree
  rooted at $i$
  satisfies
\[
       \lim_{n\to \infty} R_n^{(i)} = \infty~.
\]
Hence, for all distributions of $Z$, we have $\EXP S^{(i)} = \infty$.
\item[(2)]
The sequence $(R_n^{(0)})_{n\ge 0}$ is subadditive, that
is, for all $n,m \ge 0$, $R_{n+m}^{(0)} \le R_n^{(0)}+R_m^{(0)}$
  (where we define $R_n^{(0)}=0$).
\item[(3)]
For every $i\le 0$,
\[
     \lim_{n\to \infty} \frac{R_n^{(i)}}{n} = 0 \quad \text{if and only if} \quad \EXP Z = \infty~.
   \]
\item[(4)]   
If   $\EXP Z < \infty$, then 
\begin{equation}
\label{eq:finitemean}
     \lim_{n\to \infty} \frac{R_n^{(0)}}{n} = \frac{1}{\EXP Z}~.
\end{equation}
\item[(5)]
  Also, for all distributions of $Z$ and for all $i\le 0$
  and $n\in \N$,
\begin{equation}
\label{eq:meantreesize}
   R_n^{(i)} \le 1+ p_{1-i} R_n^{(0)}~.
\end{equation}
\end{itemize}
\end{proposition}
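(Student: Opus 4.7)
The plan hinges on a single first-step decomposition. For $i\le 0$, conditioning on which vertex $k\in\{1,\ldots,n\}$ is a level-one child of the root $i$ (i.e., satisfies $Z_k=k-i$), together with the independence of $Z_k$ from $Z_{k+1},Z_{k+2},\ldots$ and the translation invariance of the attachment rule, gives
\[
R_n^{(i)} \;=\; \sum_{k=1}^n q_{k-i}\bigl(1 + R_{n-k}^{(0)}\bigr),
\]
since an exit vertex $k$ contributes $1+R_{n-k}^{(0)}$ descendants in $\{k,\ldots,n\}$ by translation. From this identity, part (5) is immediate: $R_{n-k}^{(0)}\le R_n^{(0)}$ and $\sum_{k=1}^n q_{k-i}\le p_{1-i}$ combine to give $R_n^{(i)}\le p_{1-i}(1+R_n^{(0)})$, and then $p_{1-i}\le 1$ yields the stated bound.

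For part (1), I would work with the generating functions $R(z)=\sum_{n\ge 0}r_n z^n$ and $Q(z)=\sum_{t\ge 1}q_t z^t$. The recursion \eqref{eq:basicrecursion} yields $R(z)=1/(1-Q(z))$; as $z\uparrow 1$, $Q(z)\uparrow 1$ so $R(z)\to\infty$, and since $r_n\ge 0$, this forces $\sum_n r_n=\infty$, i.e.\ $R_n^{(0)}\to\infty$. For $i<0$, full support of $Z$ lets me pick $k$ with $q_{k-i}>0$, and the decomposition then gives $R_n^{(i)}\ge q_{k-i}(1+R_{n-k}^{(0)})\to\infty$.

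Parts (3) and (4) then follow from the discrete renewal theorem applied to \eqref{eq:basicrecursion}: aperiodicity is ensured by full support, so $r_n\to 1/\EXP Z$ (convention $1/\infty=0$). Cesaro averaging of $R_n^{(0)}/n=(1/n)\sum_{s=1}^n r_s$ gives $R_n^{(0)}/n\to 1/\EXP Z$, which is (4); the ``if and only if'' of (3) is then the dichotomy $1/\EXP Z>0$ versus $1/\EXP Z=0$.

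The main obstacle is part (2). Applying the first-step decomposition to the shifted process living on $\{n+1,\ldots,n+m\}$, and grouping each vertex by the first ancestor it reaches in $\{0,1,\ldots,n\}$, I would derive
\[
R_{n+m}^{(0)}-R_n^{(0)} \;=\; \sum_{j=0}^{n} r_{n-j}\,R_m^{(-j)},
\]
where $R_m^{(-j)}$ is the expected number of descendants of the root $-j$ in $\{1,\ldots,m\}$ of a fresh process. Bounding $R_m^{(-j)}$ by part (5), and using the generating-function identity $\sum_{k=0}^{n-1} r_k\,p_{n-k+1}=1-r_n$ (which drops out of $R(z)P(z)=z/(1-z)$ with $P(z)=\sum_{n\ge 1}p_n z^n$), produces $R_{n+m}^{(0)}\le R_n^{(0)}+R_m^{(0)}+1-r_n$. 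This is the classical subadditivity of the renewal function $U(n)\defeq 1+R_n^{(0)}$, namely $U(n+m)\le U(n)+U(m)$; the hardest point is to reconcile this with the sharp form stated in (2), either via a tighter coupling that absorbs the $1-r_n$ correction or by interpreting $R_n^{(0)}$ as inclusive of the root.
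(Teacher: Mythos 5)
Your first-step decomposition $R_n^{(i)}=\sum_{k=1}^n q_{k-i}\bigl(1+R_{n-k}^{(0)}\bigr)$ is the correct one; the paper writes it as $R_n^{(i)}=1+\sum_{j=1}^n q_{j-i}R_{n-j}^{(0)}$ (see \eqref{eq:meantreesizerecursion}), which puts the ``$+1$'' in the wrong place given that $R_n^{(0)}=\sum_{t=1}^n r_t$ excludes the root: already $R_1^{(0)}=q_1\neq 1$ in general. Apart from that, you and the paper arrive at parts (1), (3), (4), (5) by somewhat different tools. You get (1) via $R(z)=1/(1-Q(z))\to\infty$ and Abel's theorem, whereas the paper shows that the expected number of vertices at path-distance $k$ from $0$ equals $1$ for every $k$, which gives (1) and a nice profile remark for free. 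For (3)--(4) you invoke the renewal theorem and Cesàro averaging; the paper instead telescopes the recursion directly for (3) (no aperiodicity needed) and uses singularity analysis (Flajolet--Sedgewick Corollary VI.1) for (4). Your route is fine, but you should invoke the \emph{elementary} renewal theorem $\sum_{t\le n}r_t/n\to 1/\EXP Z$ rather than the local limit $r_n\to 1/\EXP Z$, since the latter requires aperiodicity which the statement does not assume.

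Your worry about part (2) is well founded: the sharp inequality $R_{n+m}^{(0)}\le R_n^{(0)}+R_m^{(0)}$ with $R_0^{(0)}=0$ is simply false. Take $Z$ uniform on $\{1,2\}$: then $r_1=1/2$, $r_2=3/4$, so $R_1^{(0)}=1/2$ while $R_2^{(0)}=5/4>1=2R_1^{(0)}$. What is true is subadditivity of the renewal function $U(n)=1+R_n^{(0)}=\sum_{t=0}^n r_t$, i.e.\ $R_{n+m}^{(0)}\le R_n^{(0)}+R_m^{(0)}+1$. The paper's induction in fact proves exactly this: the recursion it uses, $a_n=1+\sum_{t=1}^n q_t a_{n-t}$, is the recursion satisfied by $U(n)$ (with $U(0)=1$, not $0$), not by $R_n^{(0)}$. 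Your shift decomposition $R_{n+m}^{(0)}-R_n^{(0)}=\sum_{j=0}^n r_{n-j}R_m^{(-j)}$ is correct and, combined with $R_m^{(-j)}\le p_{1+j}\bigl(1+R_m^{(0)}\bigr)$ and the renewal identity $\sum_{k=0}^{n}r_k p_{n+1-k}=1$, gives $R_{n+m}^{(0)}\le R_n^{(0)}+R_m^{(0)}+1-r_n$, which is slightly sharper still. So you should not try to absorb the $1-r_n$ correction; rather, the statement of (2) should be read (or amended) as subadditivity of $1+R_n^{(0)}$.
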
  

\begin{proof}
 For $k\in \N$, let $N_k$ denote the number of vertices at path distance $k$
in the tree $T^{(0)}$ rooted at $0$.
Then
\begin{eqnarray*}
  \EXP N_k& = & \sum_{n=1}^\infty \PROB\left\{ \text{vertex} \ n \ \text{connects to $0$ in $k$ steps} \right\}   \\
  & = & \sum_{n=k}^\infty \PROB\left\{ X_1 + \cdots + X_k = n \right\}   \\
           & & \text{(where $X_1,\ldots,X_k$ are i.i.d.\ with the same distribution as $Z$)} \\
  & = & \PROB\left\{ X_1 + \cdots + X_k \ge k \right\} =1~.   \\
\end{eqnarray*}
Hence, $\EXP S^{(0)} = \sum_{k\in \N} \EXP N_k= \infty$, proving 
$(1)$ for the tree rooted at $0$.

In order to relate expected tree sizes rooted at different vertices
$i$, we may consider subtrees rooted at vertices $j \in \{1,2,\ldots \}$.
To this end, let $T^{(j)}_n$ denote the subtree of the forest rooted
at $j$ at time $n$ and let $S^{(j)}_n$ be its size.
Then the size of the tree rooted at $i \in \{0,-1,\ldots \}$
satisfies
\[
    S^{(i)}_n= 1 + \sum_{j=1}^n  \IND_{j-Z_j =i} S^{(j)}_n~.
\]
Noting that  $S^{(j)}_n$ is independent of $Z_j$ and that
$S^{(j)}_n$ has the same distribution as $S^{(0)}_{n-j}$, we obtain
the identity
\[
  R_n^{(i)}= 1+ \sum_{j=1}^n q_{j-i} R_{n-j}^{(0)}~.
\]
  Let $\ell$ be the least positive integer such that $q_{\ell-i}$ is strictly positive.
  The identity above implies that $R_n^{(i)} \ge q_{\ell-i}  R_{n-\ell}^{(0)}$, and
therefore $\lim_{n\to \infty} R_n^{(i)} = \infty$ for all $i$,
proving the first assertion of the theorem.

Using the fact that $R_{n-j}^{(0)} \le R_n^{(0)}$ for all $j\in [n]$,
we obtain \eqref{eq:meantreesize}.

Taking $i=0$ in the equality above, we obtain the following recursion
for the expected size of the tree rooted at $0$, at time $n\in \N$:
\begin{equation}
\label{eq:meantreesizerecursion}
  R_n^{(0)}
  =  1+ \sum_{t=1}^n q_t R_{n-t}^{(0)}~.
\end{equation}
We may use the reursive formula to prove subadditivity of 
the sequence $(R_n^{(0)})_{n\ge 0}$. We proceed by induction.
$R_{n+m}^{(0)} \le R_n^{(0)}+R_m^{(0)}$ holds trivially for all $m\ge
0$ when $n=0$. Let $k\ge 1$. Suppose now that the inequality holds for all $n\le k$
and $m\ge 0$. Then by \eqref{eq:meantreesizerecursion},
\begin{eqnarray*}
   R_{k+m+1}^{(0)} & = & R_{k+1}^{(0)} + \sum_{t=1}^{k+1}q_t \left(R_{k+m+1-t}^{(0)} -
                   R_{k+1-t}^{(0)}\right) + \sum_{t=k+2}^{k+m+1}q_t R_{k+m+1-t}^{(0)}  \\
  & \le &  R_{k+1}^{(0)} + \sum_{t=1}^{k+1}q_t R_m ^{(0)} +
          \sum_{t=k+2}^{k+m+1}q_t R_{k+m+1-t}^{(0)}  \\
            & & \quad \text{(by the induction hypothesis)} \\
  & \le &  R_{k+1}^{(0)} + \sum_{t=1}^{k+1}q_t R_m ^{(0)} +
          \sum_{t=k+2}^{k+m+1}q_t R_m ^{(0)}  \\
             & & \quad \text{(since $R_n ^{(0)}$ is nondecreasing)} \\
  & \le & R_{k+1}^{(0)} + R_m ^{(0)}~,
\end{eqnarray*}
proving $(2)$.

Next we show that if $\EXP Z=\infty$ then $R_n^{(0)}/n \to 0$.
To this end, observe that by \eqref{eq:meantreesizerecursion}, we have
\begin{eqnarray*}
\sum_{i=1}^n R_n^{(0)} & = & n + \sum_{i=1}^n \sum_{t=1}^i q_t
                             R_{i-t}^{(0)} 
                             =  n + \sum_{i=1}^n \sum_{t=0}^{i-1} q_{i-t} R_t^{(0)} \\
                       & = & n + \sum_{t=0}^{n-1} R_t^{(0)} \sum_{i=t+1}^n q_{i-t}
  =  n + \sum_{t=0}^{n-1} R_t^{(0)} (1-p_{n-t+1})~.  
\end{eqnarray*}
Thus,
\begin{eqnarray*}
  n  =  R_n^{(0)} + \sum_{t=0}^{n-1} R_t^{(0)} p_{n-t+1}  
  \ge  R_n^{(0)} + R_{\lfloor n/2 \rfloor}^{(0)} \sum_{t=\lfloor
          n/2 \rfloor}^{n-1} p_{n-t+1}  
    \ge  R_{\lfloor n/2 \rfloor}^{(0)}\sum_{t=2}^{\lfloor n/2 \rfloor} p_t~.
\end{eqnarray*}
Hence,
\[
    \frac{R_{\lfloor n/2 \rfloor}^{(0)}}{n} \le
    \frac{1}{\sum_{t=2}^{\lfloor n/2 \rfloor} p_t} \to 0~.
\]

It remains to prove \eqref{eq:finitemean}.
  (Note that \eqref{eq:finitemean} implies that $R_n^{(i)}/n$ is
  bounded away from zero for all $i\le 0$ and therefore 
  if $R_n^{(i)}/n \to 0$ for some $i\le 0$
then $\EXP Z = \infty$.)
To this end, let
  \[
      f(z) = \sum_{n=0}^\infty R_n^{(0)} z^n
   \] 
be the generating function of the sequence $\{R_n^{(0)}\}_{n\ge 0}$, where $z$
is a complex variable.
Using the recursion \eqref{eq:meantreesizerecursion}, we see that
\begin{eqnarray*}
    f(z) & = & \sum_{n=0}^\infty z^n + \sum_{n=1}^\infty \sum_{t=1}^n
               q_t R_{n-t}^{(0)} z^n \\
         & = &
               \sum_{n=0}^\infty z^n + \sum_{t=1}^\infty q_t z^t \sum_{n=t}^\infty
               R_{n-t}^{(0)} z^{n-t}  \\
       & = & \frac{1}{1-z}+ Q(z) f(z)~,
\end{eqnarray*}
where $Q(z) = \sum_{n=1}^\infty q_nz^n$ is the generating function of
the sequence $\{q_n\}_{n\ge 1}$.
Thus, we have
\[
     f(z) = \frac{1}{(1-z)(1-Q(z))}~.
\]
Recall that we assume here that $\EXP Z < \infty$. Since $1-Q(z) \sim (1-z)  \sum_{n=1}^\infty n q_n=(1-z) \EXP Z$ when
$z\to 1$, we have
\[
    f(z) \sim \frac{1}{(1-z)^2\EXP Z} \quad \text{when $z\to 1$}~.
  \]
 \eqref{eq:finitemean} now follows from Corollary VI.1 of Flajolet and
 Sedgewick \cite{FlSe09}.
\end{proof}

\begin{remark}
{\sc (profile of the $0$-tree.)}
Note that we proved in passing that, regardless of the distribution,
for all $k \in \N$, the number $N_k$ of vertices in the
tree $T^{(0)}$ that are at path distance $k$ from the root satisfies $\EXP N_k=1$.
The
sequence $\{N_k\}_{k=1}^\infty$ is often called the \emph{profile} of the tree
$T^{(0)}$.
\end{remark}

\begin{remark}
{\sc (expected vs.\ actual size.)}
While Proposition \ref{prop: treesizes} summarizes the properties of the \emph{expected}
tree sizes $R_n^{(i)} = \EXP S_n^{(i)}$, it is worth emphasizing that the random variables
$S_n^{(i)}$ behave very differently. For example, when $\EXP Z = \infty$, then we know from
Theorem \ref{thm:extinction} that for each $i\le 0$, $S^{(i)}= \limsup_{n\to\infty} S_n^{(i)} < \infty$,
while, by Proposition \ref{prop: treesizes}, $\EXP S_n^{(i)} \to
\infty$. Also note that, for all distributions of $Z$, 
for each $i\le 0$, $\PROB\{S^{(i)}=1\}$ is strictly positive and therefore $S_n^{(i)}$ does
not concentrate.
\end{remark}

\section{The number of trees of the forest}
\label{sec:numberoftrees}

In this section we study the number
$M_n= \sum_{i \in  \{0,-1,-2,\ldots\}} \IND_{S^{(i)}_n >1}$
of trees in the random forest that have at least one vertex $t\in [n]$
attached to the root $i$. In the motivating topic recommendation problem,
this random variable describes the number of topics that are
recommended by time $n$.

We show that the expected number of trees $\EXP M_n$ goes to infinity as $n\to \infty$  if and only
if $\EXP Z = \infty$. Moreover, $M_n \sim m_n$ in probability, where
$m_n= \EXP\left[ \min(Z,n)\right]$.

Note that it follows from Theorem \ref{thm:extinction} that if $\EXP Z = \infty$, then
$M_n \to \infty$ almost surely.

In order to understand the behavior of $M_n$, we first study the
random variable
\[
    O_n = \sum_{t=1}^n\IND_{Z_t \ge t}~.
\]
Note that when $Z_t\ge t$, then   vertex $t$ connects directly to the
root $t-Z_t \le 0$. 
Hence, $O_n$ is the number of vertices in the forest at depth
$1$ (i.e., at graph distance $1$ from the root of the tree containing the vertex). 
Equivalently, $O_n= \sum_{i \in \{0,-1,\ldots\}} D_n^{(i)}$ is the sum
of the degrees of the roots of all trees in the forest at time $n$.

\begin{proposition}
\label{prop:numberoftrees}
{\sc (number of trees.)}
The random variables $M_n$ and $O_n$ satisfy the following:
\begin{itemize}
\item[(i)] $\EXP O_n = m_n$~;
\item[(ii)]  If $\EXP Z = \infty$, then $(O_n-m_n)/\sqrt{m_n}$~;
  converges, in distribution, to a standard normal random variable.
\item[(iii)] $M_n \le O_n$ and if $\EXP Z = \infty$, then
  $\EXP[O_n-M_n] = o(m_n)$. 
\item[(iv)]  If $\EXP Z = \infty$, then $M_n/m_n \to 1$ in probability.
\item[(v)] For all $x>0$,
\[
    \PROB\{ M_n > \EXP M_n+x \}  \le\left(\frac{\EXP M_n}{\EXP M_n +x}\right)^{\EXP M_n+x} e^{-x}
\]
and
\[
     \PROB\{ M_n < \EXP M_n- x \}  \le e^{-x^2/(2\EXP M_n)}~.
\]
\end{itemize}
\end{proposition}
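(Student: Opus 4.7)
The plan is to view $O_n$ as a sum of independent Bernoullis, reduce $M_n$ to $O_n$ up to a lower-order error, and exploit negative association for the concentration bounds in (v).

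For (i), linearity gives $\EXP O_n = \sum_{t=1}^n \PROB\{Z_t \ge t\} = m_n$. For (ii), $O_n$ is a sum of the independent Bernoullis $\IND_{Z_t \ge t}$ with parameters $p_t$, so $\var(O_n) = m_n - \sum_{t=1}^n p_t^2$. A splitting argument---separating the finitely many indices $t$ with $p_t > \epsilon$ from the rest and using $p_t \to 0$---yields $\sum_t p_t^2 = o(m_n)$, hence $\var(O_n) \sim m_n$. Since the summands are bounded by $1$ and the variance tends to infinity, Lindeberg's condition holds trivially, giving the stated CLT.

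For (iii), the deterministic inequality $M_n \le O_n$ holds because every non-empty tree contributes at least one vertex at depth $1$. For the expected gap, write $O_n - M_n = \sum_{i \le 0}\bigl(D_n^{(i)} - \IND_{D_n^{(i)} \ge 1}\bigr)$. Each $D_n^{(i)} = \sum_{t=1}^n \IND_{Z_t = t-i}$ is a sum of independent Bernoullis with mean $\lambda_i \defeq \sum_{t=1}^n q_{t-i}$, and the Bonferroni inequality $\PROB\{D_n^{(i)} \ge 1\} \ge \lambda_i - \tfrac{1}{2}\lambda_i^2$ gives $\EXP[O_n - M_n] \le \tfrac{1}{2}\sum_{i \le 0}\lambda_i^2$. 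To show this is $o(m_n)$, note $\lambda_i \le p_{1-i} \to 0$ as $i \to -\infty$, so for any $\epsilon > 0$ there are at most finitely many (say $K_\epsilon$) indices with $\lambda_i > \epsilon$; then $\sum_i \lambda_i^2 \le K_\epsilon + \epsilon\sum_i \lambda_i = K_\epsilon + \epsilon\, m_n$, which divided by $m_n$ tends to $\epsilon$, and $\epsilon \to 0$ finishes. Part (iv) is then immediate: Markov combined with (iii) gives $(O_n - M_n)/m_n \to 0$ in probability, while (ii) gives $O_n/m_n \to 1$ in probability, so $M_n/m_n \to 1$ in probability.

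For (v), the point is that the family $(\IND_{D_n^{(i)} \ge 1})_{i \le 0}$ is negatively associated (NA). The base collection $\{\IND_{Z_t = t-i}\}_{t \in [n],\, i \le 0}$ is NA, because for each fixed $t$ the indicators across $i$ are mutually exclusive (their sum is at most $1$, a standard NA base case), and different $t$'s are independent, so NA propagates to the whole family via the independence closure. Since $\IND_{D_n^{(i)} \ge 1}$ is a monotone nondecreasing function of the sub-family $\{\IND_{Z_t = t-i}: t \in [n]\}$ and the sub-families for different $i$'s are disjoint, closure under coordinate-monotone functions of disjoint blocks shows that $(\IND_{D_n^{(i)} \ge 1})_{i \le 0}$ is itself NA. Consequently $\EXP e^{\lambda M_n} \le \exp((e^\lambda - 1)\EXP M_n)$ for every $\lambda \in \R$, and the standard Chernoff optimization (over $\lambda > 0$ for the upper tail, and over $\lambda < 0$ followed by the elementary bound $(1-u)\log(1-u) + u \ge u^2/2$ for the lower tail) yields the stated inequalities. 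The hardest parts are the $o(m_n)$ estimate in (iii) and the precise NA set-up in (v); both rely on elementary but delicate splitting and closure arguments.
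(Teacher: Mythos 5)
Your proof is correct and follows essentially the same route as the paper: represent $O_n$ as a sum of independent Bernoullis, bound $\EXP[O_n-M_n]$ by a second-order term, and use negative association of the family $\{\IND_{Z_t=t-i}\}_{t,i}$ (and its closure under monotone functions of disjoint blocks) to get Chernoff-type tails for $M_n$. The differences are cosmetic: you invoke Lindeberg where the paper invokes Lyapunov for (ii), and you use the Bonferroni/inclusion-exclusion bound $\PROB\{D_n^{(i)}\ge 1\}\ge\lambda_i-\lambda_i^2/2$ where the paper uses $1-\prod(1-q_{t-i})\ge 1-e^{-\lambda_i}\ge\lambda_i-\lambda_i^2/2$; both give the same $\tfrac12\sum_i\lambda_i^2$ error, and your uniform-in-$n$ splitting argument for $\sum_i\lambda_i^2=o(m_n)$ matches the paper's terse justification. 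One small remark on (v): the standard Chernoff optimization you describe yields $\left(\tfrac{\EXP M_n}{\EXP M_n+x}\right)^{\EXP M_n+x}e^{+x}$ for the upper tail (equivalently $\exp(-\EXP M_n\,h(x/\EXP M_n))$ with $h(u)=(1+u)\log(1+u)-u$); the exponent $e^{-x}$ in the stated proposition appears to be a sign typo, so your derivation reaches the correct form of the bound.
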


\begin{proof}
Note that $O_n$ is a sum of independent Bernoulli random variables
and 
\[
    \EXP O_n = \sum_{t=1}^n p_t = m_n~.
\]
To prove (ii), we may use Lyapunov's central limit theorem.
Indeed,
\[
    \var(O_n)= \sum_{t=1}^n p_t(1-p_t)= m_n - \sum_{t=1}^n p_t^2~.
\]
If $\EXP Z = \infty$, then $\sum_{t=1}^n p_t^2 = o(m_n)$. (This simply
follows from the fact that $p_t \to 0$.)
In order to use Lyapunov's central limit theorem, it suffices that
\[
\frac{\sum_{t=1}^n \EXP |\IND_{Z_t \ge t} - p_t|^3}{\var(O_n)^{3/2}} \to 0~.
\]
This follows from
\[
  \sum_{t=1}^n \EXP |\IND_{Z_t \ge t} - p_t|^3
  = \sum_{t=1}^n  p_t (1 - p_t) \left((1 - p_t)^2+ p_t^2\right)
   \le \var(O_n)~.
\]
In order to prove (iii), observe that for each $i\in \{0,-1,-2,\ldots \}$,
\[
  \EXP \IND_{S^{(i)}_n >1} = 1 - \PROB\left\{S^{(i)}_n =1 \right\}
   = 1 - \prod_{t=1}^n (1- q_{t-i})~,
\]
and therefore
\begin{eqnarray*}
    \EXP M_n & \ge & \sum_{i \le 0} \left( 1 - e^{-\sum_{t=1}^n
                     q_{t-i}}\right)  \\
             & = & \sum_{i \le 0}\left( 1 - e^{-(p_{1-i}-p_{n+1-i})}\right) \\
  & \ge & \sum_{i \le 0} (p_{1-i}-p_{n+1-i})  - \frac{1}{2} \sum_{i \le
          0} (p_{1-i}-p_{n+1-i})^2 \\
             & & \text{(using $e^{-x} \le 1-x+x^2/2$ for $x\ge 0$)}  \\
  & = & \sum_{t=1}^n p_t   - \frac{1}{2} \sum_{i \le
          0} (p_{1-i}-p_{n+1-i})^2 = m_n (1-o(1))~,
\end{eqnarray*}
where the last assertion follows from the fact that $p_{1-i}-p_{n+1-i}
\to 0$ as $i\to -\infty$
and that $\sum_{t=1}^n p_t\to \infty$ when $\EXP Z =\infty$.
Part (iv) simply follows from (ii), (iii), and Markov's inequality. Indeed,
$M_n\le O_n$ and for every $\epsilon >0$,
\[
  \PROB\left\{  O_n -M_n > \epsilon m_n \right\} \le
  \frac{\EXP[O_n-M_n]}{\epsilon m_n} = o(1)~.
\]
The exponential inequalities of (v) follow from the fact that the
collection of indicator random variables
$\left\{\IND_{Z_n=n-i}\right\}_{n\ge 1,  i\le 0}$ is negatively associated
(Dubhashi and Ranjan \cite[Proposition 11]{DuRa98}).
This implies that the collection of indicators
\[
  \left\{ \IND_{S^{(i)}_n >1}\right\}_{i\le 0} =
    \left\{ \IND_{\sum_{t=1}^n \IND_{Z_t=t-i} >0} \right\}_{i\le 0}
\]
 is also negatively associated (\cite[Proposition 7]{DuRa98}).
 Hence, by \cite[Proposition 5]{DuRa98}, the tail probabilities of
  the sum $M_n = \sum_{i\le 0} \IND_{S^{(i)}_n >1}$ satisfy 
  the Chernoff bounds for the corresponding sum of independent random
  variables.
 The inequalities of (v) are two well-known examples of the Chernoff
 bound (see, e.g., \cite{BoLuMa13}).
\end{proof}


\section{Number of leaves}
\label{sec:leaves}

Let $L_n$ denote the number of leaves of the tree rooted at $0$, at
time $n$. That is, $L_n$ is the number of vertices $t\in [n]$ such
that $C_t=0$ and no vertex $s\in \{t+1,\ldots,n\}$ is attached to it.
Recall that $r_n = \PROB\{C_n=0\}$ is the probability that vertex $n$
belongs to the tree $T_n^{(0)}$ rooted at $0$ and $R_n^{(0)}= \sum_{t=1}^n r_t$
is the expected size of the tree $T_n^{(0)}$.
The following proposition shows that the expected number of leaves is 
proportional to the expected number of vertices in the tree.

\begin{proposition}
\label{prop:leaves}
{\sc (number of leaves.)}
Denote $q_{max}= \max_n q_n$.
If $\EXP Z = \infty$, then there exists a constant $c \in [e^{-1/(1-q_{max})},
e^{-1}]$ such that
\[
  \lim_{n\to \infty}   \frac{\EXP L_n}{R_n^{(0)}} = c~.
\]
\end{proposition}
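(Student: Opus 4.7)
The plan is to first write $\EXP L_n$ as a convolution and identify the prospective constant as $c = a_\infty \defeq \prod_{j=1}^\infty (1-q_j)$, and then derive the limit via a Cesàro-type argument exploiting the fact that $R_n^{(0)} \to \infty$ when $\EXP Z = \infty$ (as established in Proposition \ref{prop: treesizes}, part (1)).

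For the first step, I would observe that vertex $t \in [n]$ is a leaf of $T_n^{(0)}$ precisely when the two events $\{C_t = 0\}$ and $\{Z_s \ne s-t \text{ for all } s \in \{t+1,\ldots,n\}\}$ both occur. The first event depends only on $Z_1,\dots,Z_t$ (via the recursion $C_t = C_{t-Z_t}$), while the second depends only on $Z_{t+1},\dots,Z_n$, so they are independent. Hence
\[
\EXP L_n = \sum_{t=1}^n r_t\, a_{n-t}, \qquad \text{where } a_k \defeq \prod_{j=1}^k (1-q_j).
\]
The sequence $a_k$ is nonincreasing and converges to $a_\infty = \prod_{j=1}^\infty (1-q_j) \in (0,1)$; the bounds $-\log(1-x) \ge x$ (giving $-\log a_\infty \ge \sum_j q_j = 1$) and $-\log(1-x) \le x/(1-x) \le x/(1-q_{max})$ for $x \le q_{max} < 1$ (giving $-\log a_\infty \le 1/(1-q_{max})$) yield exactly $a_\infty \in [e^{-1/(1-q_{max})}, e^{-1}]$. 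The assumption $\EXP Z = \infty$ rules out $q_{max}=1$ (which would force $Z$ to be a point mass, contradicting $\EXP Z = \infty$), so these bounds are meaningful.

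For the second step, I would show that $\EXP L_n / R_n^{(0)} \to a_\infty$ by estimating the remainder
\[
\EXP L_n - a_\infty R_n^{(0)} = \sum_{t=1}^n r_t (a_{n-t} - a_\infty).
\]
Given $\eps > 0$, pick $K$ so that $|a_k - a_\infty| < \eps$ for $k \ge K$. Splitting the sum at $t = n-K$, the contribution from $t \le n-K$ is at most $\eps \sum_{t=1}^{n-K} r_t \le \eps R_n^{(0)}$, while the contribution from $n-K < t \le n$ is bounded by $(1-a_\infty) \sum_{t=n-K+1}^n r_t \le K$, since $r_t \le 1$. Dividing by $R_n^{(0)}$ and using $R_n^{(0)} \to \infty$ (Proposition \ref{prop: treesizes}(1)) gives $\limsup_n |\EXP L_n / R_n^{(0)} - a_\infty| \le \eps$, and since $\eps$ is arbitrary the limit equals $c = a_\infty$.

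The main step requiring care is the independence argument behind the convolution formula; once that is in hand, the rest is a routine slowly-varying/Cesàro estimate. No serious obstacle is anticipated, since the bounds on $a_\infty$ fall out of elementary inequalities and the divergence of $R_n^{(0)}$ is already available from the expected tree size proposition.
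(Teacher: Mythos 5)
Your proof is correct and follows essentially the same route as the paper: the same convolution formula $\EXP L_n = \sum_{t=1}^n r_t \prod_{s=1}^{n-t}(1-q_s)$, the same identification of $c$ as the limiting product, the same elementary bounds $\log(1-x) \le -x$ and $\log(1-x) \ge -x/(1-x)$, and the same split-and-Ces\`aro argument using $R_n^{(0)} \to \infty$. Your extra remark that $\EXP Z = \infty$ rules out $q_{max}=1$ is a small but worthwhile clarification that the paper leaves implicit.
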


\begin{proof}
Let $t\in [n]$.   
Since the event $\{C_t=0\}$ is independent of the event that 
no vertex $s\in \{t+1,\ldots,n\}$ is attached to $t$, we may write
\begin{eqnarray*}
  \EXP L_n & = & \sum_{t=1}^n r_t  \prod_{s=t+1}^n (1-q_{s-t})  \\
           & = & \sum_{t=1}^n r_t  \prod_{s=1}^{n-t} (1-q_s)  \\
           & = & \sum_{t=1}^n r_t  e^{ \sum_{s=1}^{n-t} \log(1-q_s)}~.
\end{eqnarray*}
The sequence $\left(\sum_{s=1}^n \log(1-q_s)\right)_{n\ge 1}$ is monotone decreasing and,
using that $\log(1-x) \ge -x/(1-x)$ for $x \ge 0$, 
\[
  \sum_{s=1}^n\log(1-q_s) \ge \sum_{s=1}^n \frac{-q_s}{1-q_s}
  \ge \sum_{s=1}^n \frac{-q_s}{1-q_s} \ge \frac{-1}{1-q_{max}}~.
\]
Thus, there exists $c \ge e^{-1/(1-q_{max})}$ such that for all
$\epsilon >0$, there exists $n_0\in \N$ such
that $|e^{ \sum_{s=1}^n \log(1-q_s)} -c| < \epsilon$ whenever
$n>n_0$.
But then
\begin{eqnarray*}
\left| \EXP L_n- c R_n^{(0)} \right| & = &
\left| \sum_{t=1}^n r_t \left( e^{ \sum_{s=1}^{n-t} \log(1-q_s)}
                                            -c\right) \right|   \\
                                      & \le &
                                              n_0 + \epsilon \sum_{t=1}^{n-n_0} r_t                  \\
  & \le & \epsilon R_n^{(0)} + O(1)~.
\end{eqnarray*}
To see why $c \le 1/e$, note that 
\[
  \sum_{s=1}^n\log(1-q_s) \le - \sum_{s=1}^n q_s \to -1~.
\]
\end{proof}

\begin{remark}
{\sc (expected vs.\ actual random number of leaves.)}
Just like the size of the trees, the number of leaves $L_n$ is
not concentrated around its expectation. Indeed, when $\EXP Z=\infty$,
$L_n \le S^{(0)}$ is an almost surely bounded sequence of random variables, whereas
$\EXP L_n \to \infty$ by Propositions \ref{prop: treesizes} and \ref{prop:leaves}.
\end{remark}

\begin{remark}
{\sc (number of leaves when $\EXP Z < \infty$)}
Proposition \ref{prop: treesizes} is only concerned with the case $\EXP Z=\infty$.
When $Z$ has finite expectation, then the number of leaves of the tree rooted at $0$
depends on whether the tree survives or not. Recall that the events
$S^{(0)}<\infty$
and $S^{(0)} =\infty$
both have positive probability.
It is easy to see that, conditioned on the event $S^{(0)}=\infty$, the ratio $L_n/S_n^{(0)}$ almost surely converges to
$\PROB\{S^{(0)}=1\}$. On the other hand, conditioned on the event $S^{(0)}<\infty$,
$L_n/S_n^{(0)}$ converges to a nontrivial random variable taking values in $[0,1]$.
\end{remark}

\section{Degrees}
\label{sec:degrees}

The \emph{outdegree} $D_n^{(i)}$ of a vertex $i\in \Z$ is the number of vertices
attached to it at time $n\in \N$, that is,
\[
    D_n^{(i)} = \sum_{t=\max(1,i+1)}^n \IND_{Z_t=t-i}~.
\]
We also write 
\[
    D^{(i)} = \sum_{t=\max(1,i+1)}^\infty \IND_{Z_t=t-i}
  \]
for the degree of vertex $i$ in the random forest at the end of the
attachment process.
Note that for all root vertices $i\le 0$, $\EXP D_n^{(i)} =
\sum_{t=1}^n q_{t-i}= p_{1-i}-p_{n-i+1}$  and $\EXP D^{(i)} =p_{1-i}$,
while for all other vertices $i\ge 1$,
$\EXP D_n^{(i)} = 1-p_{n-i+1}$  and $\EXP D^{(i)}=1$.

First we show that the degrees among all root vertices is a
tight sequence of random variables under general conditions, with the possible
exception of some extremely heavy-tailed distributions. 

\begin{proposition}
\label{prop:rootdegrees}
{\sc (maximum root degree.)}
If the distribution of $Z$ is such that there exists
$\lambda >0$ such that $\sum_{n=1}^\infty p_n^\lambda < \infty$, then
the root degrees $\{ D^{(i)}\}_{i <0}$ form a tight sequence of random
variables. In particular, for all $x\ge \lambda$, we have
\[
    \PROB\left\{ \max_{i <0} D^{(i)}> x \right\} \le
    \left(\frac{e}{x}\right)^x \sum_{n=1}^\infty p_n^x~.
\]
\end{proposition}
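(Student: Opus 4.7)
The plan is to apply a standard exponential Chernoff bound to each $D^{(i)}$ individually, and then to combine the resulting tail estimates via a union bound over $i<0$. For $i<0$ we have
\[
D^{(i)} \;=\; \sum_{t=1}^{\infty} \IND_{Z_t=t-i}~,
\]
which is a sum of independent Bernoulli random variables with success probabilities $q_{t-i}$ whose sum equals $\EXP D^{(i)} = p_{1-i}$. The elementary inequality $1+y\le e^y$, together with independence of the $Z_t$, yields for every $\theta>0$,
\[
\EXP\bigl[e^{\theta D^{(i)}}\bigr] \;=\; \prod_{t=1}^{\infty}\bigl(1+q_{t-i}(e^{\theta}-1)\bigr) \;\le\; \exp\bigl((e^{\theta}-1)\,p_{1-i}\bigr)~,
\]
where the infinite product is handled by monotone convergence applied to the partial sums. (Note that $D^{(i)}<\infty$ a.s.\ is guaranteed by Borel--Cantelli since $\sum_t q_{t-i}=p_{1-i}<\infty$.)

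Next, by Markov's inequality,
\[
\PROB\bigl\{D^{(i)}>x\bigr\} \;\le\; \inf_{\theta>0} \exp\bigl(-\theta x + (e^{\theta}-1)p_{1-i}\bigr)~.
\]
Choosing $\theta=\log(x/p_{1-i})$, which is positive precisely when $x>p_{1-i}$, gives
\[
\PROB\bigl\{D^{(i)}>x\bigr\} \;\le\; e^{-p_{1-i}}\left(\frac{e\,p_{1-i}}{x}\right)^{\!x} \;\le\; \left(\frac{e\,p_{1-i}}{x}\right)^{\!x}~.
\]
When $x\le p_{1-i}$ the inequality still holds trivially, since $(e\,p_{1-i}/x)^x\ge 1$ in that range. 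Summing over $i<0$ and reindexing via $n=1-i\ge 2$,
\[
\PROB\Bigl\{\max_{i<0} D^{(i)}>x\Bigr\} \;\le\; \sum_{i<0}\left(\frac{e\,p_{1-i}}{x}\right)^{\!x} \;=\; \left(\frac{e}{x}\right)^{\!x}\sum_{n=2}^{\infty} p_n^{x} \;\le\; \left(\frac{e}{x}\right)^{\!x}\sum_{n=1}^{\infty} p_n^{x}~,
\]
which is the announced bound.

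It remains to deduce tightness. Since $p_n\in[0,1]$, the map $x\mapsto p_n^x$ is nonincreasing, so for every $x\ge\lambda$ we have $p_n^x\le p_n^\lambda$ and hence $\sum_{n\ge 1} p_n^x\le\sum_{n\ge 1} p_n^\lambda<\infty$ by hypothesis. On the other hand $(e/x)^x\to 0$ as $x\to\infty$, so the right-hand side of the displayed bound tends to zero as $x\to\infty$. This proves that the family $\{D^{(i)}\}_{i<0}$ is tight. The only real subtlety in the argument is justifying the moment generating function bound for the infinite sum of Bernoullis, which is handled cleanly by the partial-sum approximation; everything else is a routine application of Chernoff and the union bound.
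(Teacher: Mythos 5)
Your proof is correct and follows essentially the same route as the paper: a Chernoff bound on each root degree $D^{(i)}$ (a sum of independent Bernoullis with mean $p_{1-i}$), followed by a union bound over roots and reindexing $n=1-i$. The paper simply cites ``the Chernoff bound'' where you derive it from the moment generating function, and you add two small points of rigor that the paper leaves implicit, namely justifying the infinite-product MGF bound by monotone convergence and noting that the bound $(ep_{1-i}/x)^x\ge 1$ disposes of the regime $x\le p_{1-i}$ where the optimal $\theta=\log(x/p_{1-i})$ would not be positive.
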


As an example, consider a distribution with polynomially decaying tail
such that $q_n = \Theta(n^{-1-\alpha})$ for some $\alpha>0$.
Then $p_n = \Theta(n^{-\alpha})$, and then for any $x>1/\alpha$, we have
$\sum_{n=1}^\infty p_n^x < \infty$. However, if $q_n$ decreases much
slower, for example, if $q_n \sim 1/ (n\log^2 n)$, then the
proposition does not guarantee tightness of the root degrees.

\begin{proof}We have
\begin{eqnarray*}
  \PROB\left\{\max_{i <0} D^{(i)} > x \right\}
  & \le &
     \sum_{i\le 0} \PROB\left\{   \sum_{t=1}^\infty \IND_{Z_t=t-i}  >
          x \right\} \\
  & \le &
     \sum_{i\le 0}  \exp\left( x - p_{1-i} - x \log\frac{x}{p_{1-i}}
          \right) \\
  & & \text{(by the Chernoff bound)}  \\
   & \le &
       \sum_{i\le 0}  \exp\left( x  - x \log\frac{x}{p_{1-i}}
           \right) \\
   & = &
          \left(\frac{e}{x}\right)^x \sum_{n=1}^\infty p_n^x~,
\end{eqnarray*}  
which proves the claim.
\end{proof}

Next we show that the maximum degree of any vertex $t\in [n]$ grows at
most as the maximum of independent Poisson$(1)$ random variables
that is well known (and easily seen) to grow as $\log
n/\log\log n$.

\begin{proposition}
\label{prop:degrees}
{\sc (maximum degree.)}
For every $\epsilon >0$, with probability tending to $1$, 
\[
    \max_{t \in [n]} D^{(t)}\le (1+\epsilon) \frac{\log n}{\log\log n}~.
  \]
\end{proposition}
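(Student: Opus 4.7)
The plan is to combine a Chernoff tail bound for sums of independent Bernoullis with a union bound over $t\in[n]$. First I observe that for any $t\ge 1$, the (final) degree $D^{(t)}=\sum_{s>t}\IND_{Z_s=s-t}$ is a sum of independent indicators, the indicator corresponding to $s=t+k$ having success probability $q_k$; since these indicators depend on disjoint $Z_s$'s, independence is automatic. In particular, $\EXP D^{(t)}=\sum_{k\ge 1}q_k=1$ for every $t\ge 1$, so $D^{(t)}$ is a Bernoulli sum with total mean exactly one, regardless of the distribution of $Z$.

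Next I apply the standard Chernoff bound for Bernoulli sums of mean $\mu$ (namely $\PROB\{X\ge x\}\le (e\mu/x)^x e^{-\mu}$ for $x\ge \mu$) with $\mu=1$, obtaining
\[
     \PROB\{D^{(t)}\ge x\}\le e^{-1}\left(\frac{e}{x}\right)^x
\]
for every $x\ge 1$. Since $D_n^{(t)}\le D^{(t)}$, a union bound over $t\in[n]$ yields
\[
    \PROB\left\{\max_{t\in[n]} D_n^{(t)}\ge x\right\}\le n\,e^{-1}\left(\frac{e}{x}\right)^x.
\]

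The final step is an asymptotic calculation. Setting $x=(1+\epsilon)\log n/\log\log n$ and taking logarithms, the bound above becomes $\log n + x - x\log x - 1$. Since $\log x=\log\log n-\log\log\log n+\log(1+\epsilon)$, one has $x\log x=(1+\epsilon)\log n+o(\log n)$, and $x=o(\log n)$, so the whole expression is $-\epsilon\log n+o(\log n)\to -\infty$. Hence the probability tends to zero, giving the claim.

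There is no genuine obstacle: the argument is essentially the one that shows that the maximum of $n$ independent Poisson$(1)$ variables is asymptotically $\log n/\log\log n$, and the only mildly delicate point is noticing that even though $Z$ may be heavy-tailed, each $D^{(t)}$ with $t\ge 1$ has mean exactly one and therefore enjoys a Poisson-type Chernoff bound uniformly in $t$. Root vertices $i\le 0$ need not be considered because the statement is restricted to $t\in[n]$, and the infinite-time degree $D^{(t)}$ is an upper bound on $D_n^{(t)}$, so working with $D^{(t)}$ is enough.
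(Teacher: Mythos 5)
Your proof is correct and takes essentially the same route as the paper: a Chernoff bound for the Bernoulli sum $D^{(t)}$ (which has mean exactly $1$ for every $t\ge 1$), followed by a union bound over $t\in[n]$ and the asymptotic evaluation at $x=(1+\epsilon)\log n/\log\log n$. The paper states the resulting bound $\PROB\{\max_{t\in[n]}D^{(t)}>x\}\le n\,e^{x-1-x\log x}$ directly, which is identical to your $n\,e^{-1}(e/x)^x$; you have simply spelled out the bookkeeping.
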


\begin{proof}
The proof once again follows from a simple application of the Chernoff
bound for sums of independent Bernoulli random variables: for any $x>0$,
\begin{equation}
\label{eq:maxdegree}  
  \PROB\left\{ \max_{t \in [n]} D^{(t)}> x \right\}
  \le n e^{x-1 - x\log x}~,
\end{equation}
which converges to $0$ if $x= (1+\epsilon) \frac{\log n}{\log\log n}$
for any fixed $\epsilon >0$.
\end{proof}

\begin{proposition}
\label{prop:finitedegrees}
{\sc (all degrees are finite.)}
With probability $1$, $D^{(t)}<\infty$ for all $t\in \Z$.
\end{proposition}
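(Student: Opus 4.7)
The claim is essentially a Borel--Cantelli statement once one computes expectations. My plan is as follows.

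First, fix any vertex $t \in \Z$ and observe that $D^{(t)} = \sum_{s > \max(0,t)} \IND_{Z_s = s-t}$ is a sum of \emph{independent} Bernoulli random variables, since the $Z_s$ are i.i.d.\ and for distinct $s$ the events $\{Z_s = s-t\}$ depend on disjoint sources of randomness. A direct computation already recorded in the paper yields
\[
   \EXP D^{(t)} \;=\; \begin{cases} p_{1-t} \le 1 & \text{if } t \le 0,\\ 1 & \text{if } t \ge 1.\end{cases}
\]
In particular, $\sum_{s} \PROB\{Z_s = s-t\} \le 1 < \infty$ for every fixed $t$.

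Second, by the first Borel--Cantelli lemma (or equivalently, because a non-negative random variable with finite mean is finite almost surely), for each fixed $t \in \Z$ we have
\[
   \PROB\{D^{(t)} < \infty\} \;=\; 1.
\]
Finally, since $\Z$ is countable, a countable union of null sets is null, so
\[
   \PROB\{D^{(t)} < \infty \text{ for every } t \in \Z\} \;=\; 1,
\]
which is the desired conclusion.

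There is essentially no obstacle: the independence of the indicators $\IND_{Z_s = s-t}$ for fixed $t$ as $s$ varies is the only place one needs to be slightly careful, and it follows immediately from the definition of the model. (Note that across different $t$'s the indicators are not independent, but this is irrelevant since we only use the marginal a.s.\ finiteness of each $D^{(t)}$ together with countable additivity.)
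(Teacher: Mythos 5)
Your proof is correct, and it is genuinely simpler than the paper's. You observe that for each fixed $t$, the random variable $D^{(t)}$ is a sum of independent indicators with $\EXP D^{(t)}\le 1$, conclude that $D^{(t)}<\infty$ almost surely by finite expectation (equivalently, first Borel--Cantelli applied to the events $\{Z_s = s-t\}$), and then finish by countable additivity over $t\in\Z$. The paper instead applies a Chernoff bound to the sum of independent Bernoullis and uses Borel--Cantelli on the events $\left\{\max_{|i|\le n} D^{(i)} > 3\log n\right\}$, which simultaneously establishes the quantitative statement that the maximum degree among the first $n$ vertices (or first $n$ roots) grows no faster than $3\log n$ almost surely; mere finiteness is then a corollary. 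So the paper's argument is doing more work than the proposition strictly requires, while yours isolates exactly the minimal observation. Your proof buys elementarity and brevity; the paper's buys a rate that is reused elsewhere (Proposition~\ref{prop:degrees}). Both are valid, and the ``obstacle'' you flag (independence of the indicators for fixed $t$) is indeed the only point where care is needed, and it does follow immediately from the i.i.d.\ structure of the $Z_s$.
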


\begin{proof}
Bounding as in the proof of Proposition
\ref{prop:rootdegrees}, we see that, for every $n \in \N$
  and $x>0$,
\[
  \PROB\left\{\max_{i \in \{0,-1,\ldots,-n\}} D^{(i)} > x \right\}
   \le 
        n\exp\left( - x \log\frac{x}{e}           \right)~.
\]
Hence, by the Borel-Cantelli lemma,
 $\max_{i \in \{0,-1,\ldots,-n\}} D^{(t)} \le 3 \log n$ for all
but finitely many values of $n$, almost surely. This implies that,
almost surely, $D^{(t)}<\infty$
for all $t\le 0$.

Similarly, by taking (say) $x=3\log n$ in \eqref{eq:maxdegree}, it follows from the Borel-Cantelli lemma
that, almost surely, $\max_{t \in [n]} D^{(t)} \le 3 \log n$ for all
but finitely many values of $n$. This implies that $D^{(t)}<\infty$
for all $t\in \N$,
with probability one.
\end{proof}

\begin{remark}
{\sc (asymptotic distribution of the out-degree.)}
As argued above, the asymptotic degree of vertex $i$ may be
represented as a sum of independent Bernoulli random variables
\[
    D^{(i)}= \sum_{t=\max(1,i+1)}^\infty \IND_{Z_t=t-i} = \sum_{t=\max(1,i+1)}^\infty \text{Ber}(q_{t-i})~.
\]
For example, for all $i\ge 0$, the $D^{(i)}$ are discrete random variables with the same distribution,
satisfying $\EXP D^{(i)}=1$ and $\var(D^{(i)})=1- \sum_{t=1}^\infty q_t^2$.
\end{remark}

\section{The height of the random forest}
\label{sec:height}

  In this section we study the expected \emph{height} of the random forest.
  The height
$H_n$ of the forest, at time $n\in \N$, is the length of the longest
path of any vertex $t\in [n]$ to the root of its tree.
In Proposition \ref{prop:height} we derive an upper bound for $\EXP H_n$.
The upper bound
implies that the expected height is sublinear whenever
$q_n = \Theta(n^{-1-\alpha})$ for some $\alpha \in (0,1)$.

In Proposition \ref{prop:height-lower} we show that the expected height $H^{(0)}_n$
of the tree rooted at vertex $0$ goes to infinity, regardless of the distribution of $Z$.
Of course, this implies that $\EXP H_n \to \infty$. As a corollary, we
also show that for all distributions, $H_n \to \infty$ almost
surely. This is to be contrasted with the fact that when $\EXP Z =
\infty$, $H^{(0)}_n$ is almost surely bounded (just like the height of
any tree in the forest).

\begin{proposition}
\label{prop:height}
{\sc (upper bound for the expected height of the forest.)}
For all distributions of $Z$, we have
\[
     \EXP H_n\le \frac{2+\log n}{p_n}~.
\]
\end{proposition}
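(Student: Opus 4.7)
The plan is to bound the height by analyzing, for each vertex $t\in[n]$, the random walk that traces its ancestor chain back to the root. I expect the key conceptual point will be recognizing that this chain is governed by i.i.d.\ copies of $Z$, and then a clean union bound plus geometric tail estimate gives the result.

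First, I would fix $t\in[n]$ and let $D^{*}(t)$ denote the graph distance from vertex $t$ to the root of its tree. Starting at $t_0=t$, set $t_{j+1}=t_j-Z_{t_j}$; then $D^{*}(t)=\min\{k\ge 1:t_k\le 0\}$. Since the $t_j$ are strictly decreasing, the random variables $Z_{t_0},Z_{t_1},Z_{t_2},\ldots$ are i.i.d.\ copies of $Z$. Setting $W_k=Z_{t_0}+\cdots+Z_{t_{k-1}}$, we obtain the representation $D^{*}(t)=\min\{k:W_k\ge t\}$, and hence
\[
\PROB\{D^{*}(t)>k\}=\PROB\{W_k<t\}.
\]

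Next, I would estimate $\PROB\{W_k<t\}$ by noting that if $W_k<t$ then in particular every summand $Z_{t_j}$ is strictly less than $t$, so $\PROB\{W_k<t\}\le(1-p_t)^k$. Since $t\le n$ gives $\{W_k<t\}\subseteq\{W_k<n\}$ and $p_t\ge p_n$, we have the uniform bound $\PROB\{D^{*}(t)>k\}\le(1-p_n)^k\le e^{-kp_n}$ for every $t\in[n]$. The union bound then yields
\[
\PROB\{H_n>k\}\le n\,(1-p_n)^k\le n\,e^{-kp_n}.
\]
Note that independence of the different walks is not required; only the union bound is used.

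Finally, I would integrate the tail. Writing $\EXP H_n=\sum_{k\ge 0}\PROB\{H_n>k\}$ and splitting at $k^{*}=\lceil\log n/p_n\rceil$, the threshold where $n\,e^{-kp_n}$ drops below $1$, gives
\[
\EXP H_n\le k^{*}+\sum_{k\ge k^{*}}n(1-p_n)^k\le k^{*}+\frac{n(1-p_n)^{k^{*}}}{p_n}\le k^{*}+\frac{1}{p_n}.
\]
Using $k^{*}\le \log n/p_n+1\le \log n/p_n+1/p_n$ (since $p_n\le 1$), this collapses to $\EXP H_n\le(\log n+2)/p_n$, which is the stated inequality.

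The main obstacle is the first step: to observe that the i.i.d.\ structure of the ancestor chain comes not from the independence of $Z_t,Z_{t-1},\ldots$ at neighboring indices, but from the fact that the walk visits a strictly decreasing sequence of indices, so each $Z_{t_j}$ is an independent draw. Once that is clear, the rest is a standard geometric tail calculation.
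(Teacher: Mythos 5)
Your proof is correct and follows essentially the same route as the paper: reduce $\PROB\{H_n>k\}$ via a union bound over $t\in[n]$ to tail probabilities of a $k$-fold i.i.d.\ sum of $Z$'s, bound those tails by $(1-p_t)^k\le e^{-kp_n}$, and then split the expectation at $k^*\approx\log n/p_n$. The only differences are cosmetic: you pass to the uniform bound $(1-p_n)^k$ before summing over $t$ and evaluate the geometric tail with $1-(1-p_n)=p_n$ directly, whereas the paper works with $ne^{-kp_n}$ and uses $1-e^{-p_n}\ge p_n/2$; both give the same constant.
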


\begin{proof}
The path length of a vertex $n$ to the root of its tree exceeds $k$ if
and only if
\[
  \underbrace{Z_n+Z_{n-Z_n} + Z_{n-Z_n-Z_{n-Z_n}} + \cdots
  }_{\text{$k$ times}} < n
\]
Thus, if $X_1,\ldots,X_k$ are i.i.d.\ with the same distribution as
$Z$,
\begin{eqnarray*}
  \PROB\left\{ H_n >k \right\}
  & \le &
          \sum_{t=1}^n \PROB\left\{ X_1+ \cdots + X_k < t \right\}  \\
  & \le &
          \sum_{t=1}^n  \PROB\left\{ Z < t \right\}^k  \\
  & = &
          \sum_{t=1}^n  \left( 1 - \PROB\left\{ Z \ge t \right\}\right)^k  \\
  & \le &
          \sum_{t=1}^n e^{-kp_t}~.
\end{eqnarray*}
This implies that
\begin{eqnarray*}
  \EXP H_n & = &
                 \sum_{k=0}^\infty \PROB\left\{ H_n >k \right\}  \\
           & \le &
              \sum_{k=0}^\infty  \min\left(1, \sum_{t=1}^n e^{-kp_t} \right)                   \\
           & \le &
                   \sum_{k=0}^\infty  \min\left(1, n e^{-kp_n} \right)
                   \\
           & \le & \frac{\log n}{p_n}+\sum_{k= \left\lceil\frac{\log
                   n}{p_n}\right\rceil}^\infty  n e^{-kp_n}
                    \\
           & \le & \frac{\log n}{p_n}+ \frac{1}{1-e^{-p_n}}~.
\end{eqnarray*}
Using the fact that $e^{-x}\ge 1-x/2$ for $x\in [0,1]$, we obtain the
announced inequality.
\end{proof}

\begin{proposition}
\label{prop:height-lower}
{\sc (lower bound for the expected height of the forest.)}
For all distributions of $Z$, the expected height of the tree rooted at vertex $0$
satisfies
\[
    \lim_{n \to \infty} \EXP H^{(0)}_n= \infty~.
\]
\end{proposition}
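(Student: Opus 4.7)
The plan is to prove that $\EXP H^{(0)} = \infty$, where $H^{(0)} = \lim_{n\to\infty} H^{(0)}_n$. Since $H^{(0)}_n$ is nondecreasing in $n$ with pointwise limit $H^{(0)}$, monotone convergence gives $\EXP H^{(0)}_n \uparrow \EXP H^{(0)}$, which yields the proposition. Writing $\EXP H^{(0)} = \sum_{k\ge 1}\PROB\{H^{(0)}\ge k\}=\sum_{k\ge 1}\PROB\{N_k \ge 1\}$ (using that $H^{(0)}\ge k$ is equivalent to $N_k \ge 1$), the task reduces to showing this series diverges. The profile remark following Proposition \ref{prop: treesizes} gives $\EXP N_k = 1$ for every $k\ge 1$, so the Paley--Zygmund inequality yields
\[
\PROB\{N_k\ge 1\} \ge \frac{(\EXP N_k)^2}{\EXP[N_k^2]} = \frac{1}{\EXP[N_k^2]}.
\]
The plan therefore reduces to establishing the polynomial bound $\EXP[N_k^2] \le 1 + k$, which immediately yields $\EXP H^{(0)} \ge \sum_{k\ge 1} 1/(1+k) = \infty$.

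The main work is the second-moment estimate. Expand $\EXP[N_k^2] = 1 + \sum_{v\ne w} \PROB\{v,w\in T^{(0)}\text{ at depth } k\}$. For any ordered pair $v\ne w$ of vertices both at depth $k$ in $T^{(0)}$, the ancestral chains $v\to\cdots\to 0$ and $w\to\cdots\to 0$ must coincide from some unique first-meeting vertex $u$ downward: as soon as the two chains share a vertex, the outgoing step is determined by the same $Z$-value and they are forced to agree from that point downward, so $u$ is simply the uppermost shared vertex. Let $L = L(v,w) \in \{0,1,\ldots,k-1\}$ denote the depth of $u$ in $T^{(0)}$. The plan is to split the double sum by $L$ and by $u$, and to exploit that the $v$-to-$u$ portion (length $k-L$), the $w$-to-$u$ portion (length $k-L$), and the shared $u$-to-$0$ portion (length $L$) involve $Z$-variables at pairwise disjoint sets of vertices: the first two are supported on vertices strictly greater than $u$ and, by the choice of $u$, are disjoint from each other, while the third is supported on vertices at most $u$. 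Writing $R_j$ for a sum of $j$ i.i.d.\ copies of $Z$, independence of the relevant $Z$'s and the fact that dropping the disjointness constraint between the $v$- and $w$-chains above $u$ only increases the probability yield the factored upper bound
\[
\PROB\{v,w \text{ both at depth } k, \text{ meeting at } u\} \le \PROB\{R_L=u\}\,\PROB\{R_{k-L}=v-u\}\,\PROB\{R_{k-L}=w-u\}.
\]
Summing over $v$ and $w$ (each marginal $\sum_{m\ge 1}\PROB\{R_{k-L}=m\}$ equals $1$, since $R_{k-L}\ge k-L\ge 1$) and then over $u$ (with $\sum_u\PROB\{R_L=u\}=1$), the contribution from each fixed $L$ is at most $1$; summing over $L\in\{0,1,\ldots,k-1\}$ yields $\EXP[N_k^2]\le 1+k$, as claimed.

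I expect the main obstacle to be identifying the correct three-chain decomposition and checking that the three vertex sets supporting the relevant $Z$-variables are pairwise disjoint, which is what grants the independence used in the factoring. Once that is in place the remaining estimates are elementary; the boundary case $L=0$ is harmless, with the ``shared chain'' reducing to the single root $\{0\}$ and contributing the trivial factor $\PROB\{R_0=0\}=1$.
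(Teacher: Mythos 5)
Your proof is correct, and it follows the same core strategy as the paper — a second-moment (Paley--Zygmund) argument combined with a decomposition of the joint probability at the first meeting vertex of the two ancestral chains, with the factorization justified by the pairwise-disjoint supports of the relevant $Z$-variables. Where you genuinely differ from the paper is the choice of counting variable: the paper takes $N_k = \sum_{t\ge k}\sum_{\ell=k}^{2k-1}\IND_{t\to_\ell 0}$ (vertices at depth in $[k,2k-1]$), for which $\EXP N_k = k$ and $\EXP N_k^2 \le 4k^3$, whereas you take $N_k$ to be the number of vertices at depth exactly $k$, for which $\EXP N_k = 1$ (the profile remark) and your bound $\EXP N_k^2 \le 1+k$. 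Your choice is cleaner on both ends: the equivalence $\{H^{(0)}\ge k\}=\{N_k\ge 1\}$ is a genuine equivalence (any vertex at depth $\ge k$ has an ancestor at depth exactly $k$), whereas the paper's $[k,2k-1]$ window only needs and only uses one inclusion; and your second-moment computation avoids the extra factors of $k$ entirely, since after the coalescence decomposition the sums over $v$, $w$, and $u$ each collapse to $1$ and only the $k$ choices of the meeting depth $L\in\{0,\dots,k-1\}$ remain. Your factored upper bound $\PROB\{R_L=u\}\PROB\{R_{k-L}=v-u\}\PROB\{R_{k-L}=w-u\}$ is exactly the paper's $r_{s,m}\,r_{t-s,\ell-m}\,r_{t'-s,\ell'-m}$ specialized to $\ell=\ell'=k$, and your justification for it — summing over triples of chains, using that the three chains live on disjoint vertex sets (one below $u$, two strictly above $u$ and disjoint from each other by choice of $u$) and then dropping the disjointness constraint on the two upper chains — is the right argument. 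Both approaches give $\PROB\{H^{(0)}\ge k\} \gtrsim 1/k$ and hence $\EXP H^{(0)} = \infty$, and both invoke monotone convergence to pass from $\EXP H^{(0)}=\infty$ to $\EXP H^{(0)}_n\to\infty$. In short: same method, with your parametrization being a slight but genuine simplification of the paper's.
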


\begin{proof}
  Since $H^{(0)}_n$ is an increasing sequence of random variables, we may define
  $H^{(0)}=\lim_{n\to \infty} H^{(0)}_n$ (that may be infinite). By the monotone convergence
  theorem, it suffices to prove that $\EXP H^{(0)}=\infty$, or equivalently, that
  \begin{equation}
\label{eq:height}
   \sum_{k=1}^\infty \PROB\left\{ H^{(0)} \ge k \right\}= \infty~.
 \end{equation}
 Denote by $t\to_k 0$ the event that vertex $t>0$ is connected to vertex $0$ via a path
 of length $k$, that is,
 \[
  \underbrace{Z_t+Z_{t-Z_t} + Z_{t-Z_t-Z_{t-Z_t}} + \cdots
  }_{\text{$k$ terms}} =t
\]
and define $r_{t,k}=\PROB\left\{t\to_k 0\right\}$.

Introducing the random variable
\[
   N_k= \sum_{t=k}^\infty \sum_{\ell=k}^{2k-1}\IND_{t\to_\ell 0}~,
 \]
 note that
\[
  \PROB\left\{ H^{(0)} \ge k \right\} \ge \PROB\left\{ H^{(0)} \in [k,2k-1] \right\}
  = \PROB\{ N_k \ge 1\}~. 
\]
In order to derive a lower bound for $\PROB\{ N_k \ge 1\}$, note first that
\[
  \EXP N_k = \sum_{\ell=k}^{2k-1} \PROB\{X_1+\cdots + X_\ell \ge k\} = k
\]
where $X_1,\ldots,X_k$ are i.i.d.\ with the same distribution as
$Z$.
By the Paley-Zygmund inequality,
\[
  \PROB\{ N_k \ge 1\} \ge \left(1-\frac{1}{k}\right)^2 \frac{\left(\EXP N_k\right)^2}{\left(\EXP N_k^2\right)}~.
\]
In the argument below we show that $\EXP N_k^2\le 4k^3$. Substituting
into the inequality above, we obtain
\[
\PROB\left\{ H^{(0)} \ge k \right\} \ge  \PROB\{ N_k \ge 1\} \ge \left(1-\frac{1}{k}\right)^2 \frac{1}{4k}~,
\]
concluding the proof of $\EXP H^{(0)}=\infty$.

Hence, it remains to derive the announced upper bound for the second moment of $N_k$.
First note that for any $t,t' \ge k$ and $\ell,\ell' \in
\{k,k+1,\ldots,2k-1\}$
\[
    \PROB\{t\to_\ell 0, t'\to_{\ell'} 0\} \le
    \sum_{m=0}^{\min(\ell,\ell')} \sum_{s=0}^{\min(t,t')} r_{s,m}
    r_{t-s,\ell-m} r_{t'-s,\ell'-m}~.
\]
Then
\begin{eqnarray*}
  \EXP N_k^2 & = & \sum_{t=k}^\infty \sum_{t'=k}^\infty
                   \sum_{\ell=k}^{2k-1}\sum_{\ell'=k}^{2k-1}
                   \sum_{m=0}^{\min(\ell,\ell')} \sum_{s=0}^{\min(t,t')} r_{s,m}
                   r_{t-s,\ell-m} r_{t'-s,\ell'-m} \\
             & \le &
                     \sum_{t=k}^\infty 
                   \sum_{\ell=k}^{2k-1}\sum_{\ell'=k}^{2k-1}
                   \sum_{m=0}^{\min(\ell,\ell')} \sum_{s=0}^{\min(t,t')} r_{s,m}
                     r_{t-s,\ell-m} 
  \quad \text{(using $\sum_{t'=k}^\infty r_{t'-s,\ell'-m} \le 1$)}
  \\
             & \le &
     2k   \sum_{t=k}^\infty 
                   \sum_{\ell=k}^{2k-1}\sum_{\ell'=k}^{2k-1}
                     r_{t,\ell}   \quad \text{(since for each $m$, $\sum_{s=1}^{\min(t,t')} r_{s,m}
                     r_{t-s,\ell-m} \le r_{t,\ell}$)}  \\
           & \le &
        4k^2 \sum_{t=k}^\infty 
                   \sum_{\ell=k}^{2k-1}
                     r_{t,\ell} \\
 &=& 4k^2 \EXP N_k = 4k^3~,
\end{eqnarray*}
as desired.
\end{proof}

\begin{proposition}
\label{prop:height-as}
{\sc (almost sure lower bound for the height of the forest.)}
For all distributions of $Z$, $\lim_{n\to \infty} H_n = \infty$ almost surely.
\end{proposition}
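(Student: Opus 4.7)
The plan is to exploit monotonicity of $H_n$ in $n$: the sequence is non-decreasing, so it converges almost surely to a limit $H \in [0,\infty]$, and it suffices to show $\PROB\{H \ge k\} = 1$ for every fixed $k \in \N$; the countable intersection over $k$ then forces $H = \infty$ a.s. The idea is to construct infinitely many disjoint, independent ``slots'' along the positive integers, each of which has positive probability of producing a forced path of length $k-1$, and then invoke the second Borel--Cantelli lemma.

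Fix any $j \in \N$ with $q_j > 0$; such a $j$ exists because $\sum_n q_n = 1$. For each $\ell \ge 1$ I would consider the block of indices $I_\ell = \{\ell k j + i j : i = 1, \ldots, k-1\}$ and define the event
\[
E_\ell = \bigcap_{m \in I_\ell} \{Z_m = j\}.
\]
The blocks $I_\ell$ are pairwise disjoint (consecutive blocks are separated by at least $j$), so by the i.i.d.\ structure of $\{Z_m\}_{m \ge 1}$ the events $E_\ell$ are mutually independent, each of probability $q_j^{k-1} > 0$. On $E_\ell$, the parent of vertex $\ell k j + i j$ is exactly $\ell k j + (i-1)j$ for every $i = 1, \ldots, k-1$, so the chain
\[
\ell k j + (k-1)j \to \ell k j + (k-2)j \to \cdots \to \ell k j
\]
is a directed path of length $k-1$ in the forest, which forces $H_{\ell k j + (k-1)j} \ge k - 1$.

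Since $\sum_\ell \PROB(E_\ell) = \infty$ and the $E_\ell$ are independent, the second Borel--Cantelli lemma gives $E_\ell$ infinitely often almost surely, and hence $H \ge k - 1$ a.s.; intersecting over $k \in \N$ concludes the argument. There is no substantive obstacle: the only point requiring care is the bookkeeping check that the index blocks $I_\ell$ are pairwise disjoint so that the defining events use disjoint $Z$-variables and are therefore independent, and the spacing $\ell k j$ is chosen precisely to make this transparent.
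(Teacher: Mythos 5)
Your argument is correct, and it is genuinely different from — and substantially simpler than — the proof in the paper. The paper splits according to whether $\EXP Z$ is finite or infinite: for $\EXP Z < \infty$ it appeals to Theorem~\ref{thm:finitemean} (existence of a unique infinite tree) together with Proposition~\ref{prop:finitedegrees} (all degrees finite), while for $\EXP Z = \infty$ it first uses the second--moment argument of Proposition~\ref{prop:height-lower} to conclude that $H^{(0)}$ has unbounded support, then invokes the extinction result of Theorem~\ref{thm:extinction} to construct a sequence $A_1,A_2,\dots$ of i.i.d.\ copies of $H^{(0)}$ (heights of successive trees after the previous one dies out) and argues $\lim_n H_n \ge \sup_i A_i = \infty$. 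Your approach bypasses all of that machinery: you pick any $j$ with $q_j>0$, build, for each target height $k$, infinitely many disjoint blocks in which the event $\{Z_m=j\}$ is forced across $k-1$ consecutive multiples of $j$, and observe these events are independent with constant positive probability $q_j^{k-1}$, so one of them occurs almost surely; this gives a directed chain of length $k-1$, hence a vertex at distance $\ge k-1$ from the root of its tree, hence $H\ge k-1$ a.s.\ for every $k$. The only point to flag is that you slightly over-engineer the final step: you don't need the full strength of ``$E_\ell$ infinitely often'' from the second Borel--Cantelli lemma — since the $E_\ell$ are independent with $\PROB(E_\ell) = q_j^{k-1}>0$ constant, already $\PROB\bigl(\bigcap_\ell E_\ell^c\bigr)=\prod_\ell(1-q_j^{k-1})=0$, so at least one $E_\ell$ occurs a.s., which is all you use. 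The trade-off is that the paper's proof is longer but reuses and illuminates structural results (extinction, the infinite tree, i.i.d.\ heights of successive finite trees), whereas yours is self-contained, purely combinatorial, and requires nothing about the distribution of $Z$ beyond $q_j>0$ for some $j$.
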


\begin{proof}
 For $\EXP Z < \infty$, the statement follows from Theorem \ref{thm:finitemean}
and Proposition \ref{prop:finitedegrees}
  so we may assume that
$Z$ has infinite expectation.

Since by Proposition \ref{prop:height-lower} the
expected height of the tree rooted at $0$ has infinite expectation, it
follows that the distribution of $H^{(0)}$ has unbounded support. 

Since by Theorem \ref{thm:extinction} the tree $T^{(0)}$ rooted at $0$ becomes
extinct almost surely, the random variable $Y_1$ denoting the index of
the last vertex that belongs to $T^{(0)}$ is almost surely finite. Let
$A_1=H^{(0)}$ denote the height of the $0$-tree. Now we may define
$Y_2$ such that $Y_1+1+Y_2$ is the last vertex that belongs to the
tree $T^{(Y_1+1)}$, and let $A_2$ denote the height of this tree.
By continuing recursively, we obtain a sequence $A_1,A_2,\ldots$ of
i.i.d.\ random variables distributed as $H^{(0)}$. Moreover,
$\lim_{n\to \infty} H_n \ge \sup_i A_i$, proving the statement.
\end{proof}

  \noindent
  {\bf Acknowledgements.} We are grateful to an anonymous referee for suggestions that
lead to a much improved version of the paper.


\begin{thebibliography}{}

\bibitem[Baccelli et~al., 2022]{BaHaKh22}
Baccelli, F., Haji-Mirsadeghi, M.-O., and Khaniha, S. (2022).
\newblock Coupling from the past for the null recurrent Markov chain.
\newblock {\em arXiv preprint arXiv:2203.13585}.

\bibitem[Baccelli et~al., 2018]{BaHaKh18}
Baccelli, F., Haji-Mirsadeghi, M.-O., and Khezeli, A. (2018).
\newblock Eternal family trees and dynamics on unimodular random graphs.
\newblock {\em Unimodularity in randomly generated graphs}, 719:85--127.

\bibitem[Baccelli and Sodre, 2019]{BaSo19}
Baccelli, F. and Sodre, A. (2019).
\newblock Renewal processes, population dynamics, and unimodular trees.
\newblock {\em Journal of Applied Probability}, 56(2):339--357.

\bibitem[Blath et~al., 2013]{BlGoKuSp13}
Blath, J., Gonz{\'a}lez~Casanova, A., Kurt, N., and Spano, D. (2013).
\newblock The ancestral process of long-range seed bank models.
\newblock {\em Journal of Applied Probability}, 50(3):741--759.

\bibitem[Boucheron et~al., 2013]{BoLuMa13}
Boucheron, S., Lugosi, G., and Massart, P. (2013).
\newblock {\em Concentration {I}nequalities: A Nonasymptotic Theory of
  Independence}.
\newblock Oxford University Press.

\bibitem[Chierichetti et~al., 2020]{ChKuTo20}
Chierichetti, F., Kumar, R., and Tomkins, A. (2020).
\newblock Asymptotic behavior of sequence models.
\newblock In {\em Proceedings of The Web Conference 2020}, pages 2824--2830.

\bibitem[Dubhashi and Ranjan, 1998]{DuRa98}
Dubhashi, D. and Ranjan, D. (1998).
\newblock Balls and bins: a study in negative dependence.
\newblock {\em Random Structures \& Algorithms}, 13(2):99--124.

\bibitem[Flajolet and Sedgewick, 2009]{FlSe09}
Flajolet, P. and Sedgewick, R. (2009).
\newblock {\em Analytic Combinatorics}.
\newblock Cambridge University Press.

\bibitem[Hammond and Sheffield, 2013]{HaSh13}
Hammond, A. and Sheffield, S. (2013).
\newblock Power law {P}{\'o}lya’s urn and fractional {B}rownian motion.
\newblock {\em Probability Theory and Related Fields}, 157(3):691--719.

\bibitem[Igelbrink and Wakolbinger, 2022]{IgWa22}
Igelbrink, J.~L. and Wakolbinger, A. (2022).
\newblock Asymptotic gaussianity via coalescence probabilites in the
  {H}ammond-{S}heffield urn.
\newblock {\em arXiv e-prints}, pages arXiv--2201.

\end{thebibliography}

\end{document}